\newtheorem{thm}{Theorem}[section]
\newtheorem{lem}[thm]{Lemma}
\newtheorem{cor}[thm]{Corollary}
\newtheorem{prop}[thm]{Proposition}
\newtheorem{rmq}[thm]{Remark}
\newtheorem{exe}[thm]{Example}
\newtheorem{conj}[thm]{Conjecture}
\newtheorem{exercice}[thm]{Exercice}
\numberwithin{equation}{section}
\def\Tor{\mathrm {Tor}}
\def\Ext{\mathrm {Ext}}
\def\GL{\mathrm {GL}}
\def \Hom{\mathrm{Hom}}
\def\Sq{\mathrm{Sq}}
\def \rpinf {\mathbb{R}P^\infty}
\def \cpinf {\mathbb{C}P^\infty}
 \def\nil {\mathcal{N} il}
\def\unil {\calU/\mathcal{N} il}
\def\map{\mathrm{map}}
\def\rmH{\mathrm{H}}
\def\calA{\mathcal A}
\def\calF{\mathcal F}
\def\calG{\mathcal G}
\def\calK{\mathcal K}
 \def\calU{\mathcal U}
\def\bRP{{\mathbb RP}}
\def\bF2{{\mathbb F}_2}
\def \bZ{\mathbb{Z}}
\def \ra {\rightarrow}
\def \lra {\longrightarrow}
\def \hra {\hookrightarrow}
\def \era {\twoheadrightarrow}
\begin{document}

\title{On the mod-$2$ cohomology of some $2$-Postnikov towers}

\author{Nguy\~{\^{e}}n Th\'{\^{e}} C\textviet{\uhorn\`{\ohorn}}ng \thanks{Nguy\~{\^{e}}n Th\'{\^{e}} C\textviet{\uhorn\`{\ohorn}}ng,  Falculty of Mathematics, Mechanics and Informatics,
VNU University of Science, 334 Nguy\~{\^{e}}n Tr\~{a}i, Thanh Xu\^an, H\`a N\^oi, Vi\^et Nam} \and Lionel Schwartz \thanks{Lionel Schwartz, LAGA, UMR 7539 et LIA Formath Vietnam du CNRS, Universit\'e Paris Nord Av. J. B. Cl\'ement 93430 Villetaneuse, France}}
\maketitle

\abstract{The present note presents some results about the mod-$2$ cohomology, modulo nilpotent elements elements of
the fiber $E$ of a decomposable map $\psi : K(\bZ,2) \ra K(\bZ/2,p)$. This is more an announcement and a brief description of the tools that are used: Lannes' $T$ functor and the Eilenberg-Moore spectral sequence.}

\section{Introduction}


Let $E$ be the homotopy fiber of a map $\psi : K(\bF2,q) \ra K(\bF2,p)$ between two Eilenberg-Mac Lane spaces, the homotopy class $[\psi] $ belongs to $\rmH^q (K(\bF2,q), \bF2)$. If $\psi$ is a stable map the work of Larry Smith, David Kraines \cite{Sm1}  \cite{Sm2} \cite{Kr1} and others
 describes the mod-$2$ cohomology of $E$. The results of L. Smith  concern the fiber of an $H$-map between $H$-spaces. {Those of Kraines concern more specific examples}.  One of their main tool is the Eilenberg-Moore spectral sequence.

 To make notation shorter we will write $K_n$ for the Eilenberg-Mac Lane space $K(\bF2,n) $ in all the sequel. In all this note the mod-$2$ singular cohomology of a space $X$  will be denoted by $\rmH^*(X)$.

\medskip

In this  note we study the case where (the homotopy class of) $\psi : X \ra K_p$  is a decomposable element in the cohomlogy group $\rmH^p (X)$:  $\psi = \sum_i \,\, a_ib_i$, with $|a_i| >0$, $|b_i| >0$.
 One investigates what is possible to say on the cohomology of the homotopy fiber modulo the ideal of nilpotent elements.

After general results one focuses on the case $X=K_q$.

The main statements are:

\begin{thm}\label{mono} There is  a monomorphism:
	$$
	\frac{ ({\rmH^*(X)/(\psi^*(\iota_p)})}{Nil }\hookrightarrow  \frac{\rmH^*(E)}{Nil}
	$$
	where $Nil$ denotes the ideal of nilpotents elements.
\end{thm}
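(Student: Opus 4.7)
Since $q\circ\psi$ is null-homotopic, $q^*\psi^*(\iota_p)=0$, and by naturality of Steenrod operations $q^*$ kills every $\Sq^I\psi^*(\iota_p)$. Thus $q^*$ factors through $\rmH^*(X)/(\psi^*(\iota_p))$, understood as a quotient in the category of unstable algebras so that the ideal is $\calA$-stable, and further through the reduction modulo nilpotents. Injectivity of the resulting map amounts to the assertion that
$$
\ker\bigl(\rmH^*(X)\to \rmH^*(E)/Nil\bigr)\subseteq (\psi^*(\iota_p))+Nil(\rmH^*(X)).
$$

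\textbf{Eilenberg--Moore.} The principal tool is the Eilenberg--Moore spectral sequence for the pullback of $\psi$ along $PK_p\to K_p$:
$$
E_2^{s,t}=\Tor^{s,t}_{\rmH^*(K_p)}(\rmH^*(X),\bF_2)\Rightarrow \rmH^{t-s}(E),
$$
whose edge identifies $\Tor^{0,*}=\rmH^*(X)\otimes_{\rmH^*(K_p)}\bF_2\cong \rmH^*(X)/(\psi^*(\iota_p))$ and maps it into $\rmH^*(E)$. The proof then splits into two sub-claims: (i) the edge morphism is injective modulo nilpotents, and (ii) the higher columns $\Tor^{s}$, $s\geq 1$, contribute only nilpotent classes to $\rmH^*(E)$.

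\textbf{Lannes' $T$-functor and decomposability.} Apply Lannes' functor $T_V$ for each elementary abelian $2$-group $V$. By Lannes' theorem $T_V\rmH^*(Y)\cong \rmH^*(\map(BV,Y))$, and $T_V$ is exact, commutes with tensor products (hence with $\Tor$) and with the Eilenberg--Moore formalism. Applied to $K_p$ it produces a product of Eilenberg--Mac Lane spaces indexed by $\rmH^*(BV)$. This is where decomposability is used: for any $f:BV\to X$, $f^*\psi=\sum f^*a_i\cdot f^*b_i$ is a sum of cup products of positive-degree classes in the polynomial algebra $\rmH^*(BV)$, and this decomposability propagates through $T_V$ of the Eilenberg--Moore formalism, forcing the higher $\Tor$-terms to lie in the nilradical. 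Combining with the Lannes--Schwartz detection of $Nil$ by evaluations on the $BV$'s then transfers the local conclusion back to the global map $\rmH^*(X)\to \rmH^*(E)/Nil$.

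\textbf{Main obstacle.} The hardest step I expect is (ii): showing that the higher $\Tor^s$ with $s\geq 1$ are nilpotent in $\rmH^*(E)$. This requires a careful mod-$Nil$ analysis of the Koszul-type complex computing $\Tor^{*}_{\rmH^*(K_p)}(\bF_2,\bF_2)=\rmH^*(K_{p-1})$ and an argument that, after applying $T_V$, decomposability of $\psi$ collapses the resulting local spectral sequences onto their $s=0$ columns. Once (ii) is established, (i) becomes automatic, since modulo nilpotents the edge identifies with $E_\infty^{0,*}=\Tor^0/Nil$.
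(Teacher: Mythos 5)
Your overall strategy (Eilenberg--Moore edge homomorphism, plus the claim that everything in the columns $s\geq 1$ is nilpotent) is viable, but it is not the paper's route, and as written it has a genuine gap: the step you yourself flag as the ``main obstacle'' is never proved, and the mechanism you propose for it is wrong. You attribute the nilpotence of the higher $\Tor$-columns to the decomposability of $\psi$, claiming that decomposability ``collapses the resulting local spectral sequences onto their $s=0$ columns.'' That collapse is false: in the very cases computed in Section 5 the $E^\infty$-page carries a whole exterior algebra $\Lambda^*(\Gamma^{p-1})$ spread over all columns $s\geq 1$; those classes are nonzero, merely nilpotent. The correct and decisive input is the general connectivity estimate that $\Tor^{-s}_{\rmH^*(K_p)}(\rmH^*(X),\bF2)$ lies in $\nil_s$ (item (vi) of Section 5, quoting \cite{Sc2}, Theorem 6.4.1), which holds for \emph{any} $\psi$ --- decomposability is irrelevant to Theorem \ref{mono}. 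Granting that fact, your argument can be completed: every differential $d_r\colon E_r^{-r,*}\to E_r^{0,*}$ is $\calA_2$-linear with source a subquotient of an object of $\nil_r\subseteq\nil$, so the kernel of $\Tor^0\to E_\infty^{0,*}=F_0\subseteq\rmH^*(E)$ is a submodule of the unstable algebra $\rmH^*(X)/(\psi^*(\iota_p))$ lying in $\nil$, hence contained in its nilradical, and one concludes. But none of this is in your text; you only assert that (i) ``becomes automatic'' once (ii) is known, and (ii) is left as a hope resting on an incorrect picture.

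For comparison, the paper's proof is entirely different and much softer: Theorem \ref{mono} is a reformulation of \cite[Th\'eor\`eme 0.5]{LS2}, and the mechanism visible in Section 4 is Lannes' theorem $[BV,Y]\cong\Hom_\calK(\rmH^*(Y),\rmH^*(BV))$ together with the exact sequence of pointed sets $[BV,E]\to[BV,X]\to[BV,K_p]$ coming from the fibration. Exactness at $[BV,X]$ says that a map $BV\to X$ lifts to $E$ exactly when its composite to $K_p$ is null, i.e.\ when the corresponding morphism of unstable algebras kills $\psi^*(\iota_p)$; hence $\Hom_\calK(\rmH^*(E),\rmH^*(BV))\to\Hom_\calK(\rmH^*(X)/(\psi^*(\iota_p)),\rmH^*(BV))$ is onto for every $V$, and the equivalence $\calK/\nil\simeq\calG_\omega$ converts this surjectivity of set-valued functors into the desired $F$-monomorphism. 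No spectral sequence, no hypothesis on $\psi$. If you want to keep your EMSS proof, replace the decomposability heuristic by the $\nil_s$ statement above and write out the argument about the differentials into the $0$-column.
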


This a reformulation of  of \cite[Th\'eor\`eme 0.5]{LS2}

\begin{thm}\label{epi} There is  an epimorphism, modulo nilpotent elements:
	$$
\rmH^*(E) \ra \rmH^*(K_{p-1})
	$$
	This means that for any element $x \in \rmH^*(K_{p-1})$
	some power $x^{2^h}$, $h$ large enough, is in the image.
\end{thm}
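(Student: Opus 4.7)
\emph{Reduction.} The image of $i^{*}$ is a sub-$\calA$-algebra of $\rmH^{*}(K_{p-1})$, closed under sums, products, and Steenrod operations. Since $\rmH^{*}(K_{p-1})$ is the polynomial algebra on the admissible $\Sq^{J}\iota_{p-1}$, hence has no nilpotents, and since iterating the Cartan identities $\Sq^{2m}(z^{2})=(\Sq^{m}z)^{2}$, $\Sq^{2m+1}(z^{2})=0$ yields $\Sq^{2^{h}J}(\iota_{p-1}^{2^{h}})=(\Sq^{J}\iota_{p-1})^{2^{h}}$, it is enough to exhibit a single $h_{0}$ with $\iota_{p-1}^{2^{h_{0}}}\in\im(i^{*})$. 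Indeed, writing an arbitrary $x\in\rmH^{*}(K_{p-1})$ as a polynomial $P$ in the $\Sq^{J}\iota_{p-1}$, one has $x^{2^{h_{0}}}=P\bigl((\Sq^{J}\iota_{p-1})^{2^{h_{0}}}\bigr)$, which then lies in the image by closure.

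\emph{Serre spectral sequence and Kudo.} I would work with the Serre spectral sequence of $K_{p-1}\to E\to K_{q}$, where $\im(i^{*})$ is identified with $E_{\infty}^{0,*}$. By multiplicativity over $\bF2$, the differentials on $\iota_{p-1}^{2^{h}}$ vanish before the Kudo transgression at page $r=(p-1)2^{h}+1$, whose value is $d_{r}(\iota_{p-1}^{2^{h}})=\Sq^{L}(\psi^{*}\iota_{p})$ for the admissible $L=((p-1)2^{h-1},\ldots,p-1)$. Proving that $\iota_{p-1}^{2^{h_{0}}}$ is a permanent cycle amounts to showing that this element vanishes in the subquotient $E_{r}^{r,0}$, i.e.\ lies in the image of prior differentials. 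Those prior differentials are the Kudo transgressions of the lower fiber classes $\Sq^{J}\iota_{p-1}$ with $|J|<(p-1)(2^{h}-1)$, which hit $\psi^{*}(\Sq^{J}\iota_{p})\cdot\rmH^{*}(K_{q})$ by the Leibniz rule.

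\emph{Decomposability.} Here the hypothesis that $\psi^{*}\iota_{p}=\sum_{i}a_{i}b_{i}$ is decomposable, with $|a_{i}|,|b_{i}|>0$, becomes essential. Iterating the Cartan formula and using unstability ($\Sq^{k}z=0$ for $k>|z|$) together with $\Sq^{2m}(z^{2})=(\Sq^{m}z)^{2}$, $\Sq^{2m+1}(z^{2})=0$, one computes
\[
\Sq^{L}(a_{i}b_{i}) \;=\; a_{i}^{2^{h}}\cdot\Sq^{M_{i}^{b}}b_{i} \;+\; \Sq^{M_{i}^{a}}a_{i}\cdot b_{i}^{2^{h}},
\]
so that every summand contains an explicit $2^{h}$-th Frobenius factor in a positive-degree element. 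Combining Adem relations in $\calA$ with unstability in $\rmH^{*}(K_{q})$, one then identifies each such summand with a sum of products $\psi^{*}(\Sq^{J'}\iota_{p})\cdot z$ for admissible $J'$ with $|J'|<(p-1)(2^{h}-1)$ and $z\in\rmH^{*}(K_{q})$. For $h$ large enough this places the transgression in the image of prior differentials, making $\iota_{p-1}^{2^{h_{0}}}$ a permanent cycle, which together with the reduction concludes the proof.

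\emph{Expected obstacle.} The delicate step, on which the proof really turns, is the algebraic identification in the third paragraph: writing the Cartan expansion of the iterated Steenrod operation on a decomposable in $\rmH^{*}(K_{q})$ as a combination of $\psi^{*}$-images of strictly lower admissibles, with careful control over the required size of $h$. A conceptually cleaner route, consistent with the tools announced in the introduction, applies Lannes' $T$-functor to the whole fibration: exactness of $T_{V}$ and the explicit calculation of $T_{V}\rmH^{*}(K_{n})$ for $V=\bF2$ reduce the claim to a surjectivity statement between reduced unstable algebras, parallel to Theorem \ref{mono}, where the decomposability hypothesis translates into a vanishing condition on the $\calA$-indecomposable quotient of $\psi^{*}$.
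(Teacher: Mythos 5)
Your first two paragraphs are sound: the reduction to a single power $\iota_{p-1}^{2^{h_0}}$ (via closure of $\Im(i^*)$ under products and Steenrod operations) and the identification of $\Im(i^*)$ with $E_\infty^{0,*}$ together with the iterated Kudo transgression are correct. But the proof has a genuine gap exactly where you flag it: the claim that each summand $a_i^{2^h}\Sq^{M_i^b}b_i+\Sq^{M_i^a}a_i\,b_i^{2^h}$ of $\Sq^{L}(\psi^*\iota_p)$ can be rewritten as $\sum_{J'}\psi^*(\Sq^{J'}\iota_p)\cdot z_{J'}$, with each contributing class still alive at the relevant page, is asserted and never proved, and it is the entire mathematical content of the theorem on this route. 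Nothing in the Cartan/Adem calculus forces an individual term such as $a_i^{2^h}\Sq^{M_i^b}b_i$ into the ideal generated by the $\psi^*(\Sq^{J}\iota_p)$; in small examples it does work out (for $\psi=\iota_2Q_0(\iota_2)$ one finds $\Sq^4\psi=(Q_0\iota_2)^3+\iota_2^2Q_1(\iota_2)=Q_0(\iota_2)\Sq^1\psi+\iota_2\Sq^2\psi+\iota_2^2\psi$), but only after cancellations between the two summands and ad hoc Adem manipulations, and you give no mechanism producing such identities for a general decomposable class, nor any control of which product classes in $E_r^{*,*}$ survive long enough to perform the required killing. Note also that the theorem is stated for an arbitrary $X$ of finite type with $\pi_1$ a finite $2$-group, where $\rmH^*(X)$ is not known explicitly, so the bookkeeping of ``prior differentials'' cannot even be set up as you describe.

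The paper's proof is entirely different and much softer; it uses decomposability only through the elementary fact that a decomposable class loops to zero: since all cup products of positive-degree classes vanish on the suspension $\Sigma\Omega X$, the composite $\psi\circ ev$ is null and hence $(\Omega\psi)^*(\iota_{p-1})=0$, so $\Omega\psi\simeq *$. In the Puppe sequence of pointed mapping sets $[\Sigma BV,X]\ra[\Sigma BV,K_p]\ra[BV,E]\ra[BV,X]$ the first map is therefore trivial and the connecting map is injective. Lannes' theorem identifies $[BV,E]$ with $\Hom_\calK(\rmH^*(E),\rmH^*(BV))$ and $[\Sigma BV,K_p]\cong[BV,K_{p-1}]$ with $\Hom_\calK(\rmH^*(K_{p-1}),\rmH^*(BV))$, and the resulting injection of set-valued functors $g(\rmH^*(K_{p-1}))\hra g(\rmH^*(E))$ is, under the equivalence $\calK/\nil\simeq\calG_\omega$ of \cite{HLS}, precisely an $F$-epimorphism $\rmH^*(E)\ra\rmH^*(K_{p-1})$. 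This is the route your closing paragraph gestures at but does not carry out; if you wish to keep the spectral-sequence picture, you must isolate and actually prove the Steenrod-algebra identity your third paragraph needs, which for stable $\psi$ is essentially the content of the Smith--Kraines computations and is not available in the generality required here.
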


Part of the results  are to some extent,   already contained in  \cite{Sc1}  and  \cite{Sc2}, but are made here  more precise.
The next questions are to decide how these two theorems "match" together and to describe the right hand side quotient more explicitly in some cases. As we have said we are going to discuss this modulo nilpotent elements.

The techniques described in the note makes the following conjecture very plausible. The notations are explained after the statement.

\begin{conj} \label{conjw}
(Weak form) \label{WF}	 As graded algebras one has:
	$$
	\sqrt {\rmH^*(E)} \cong   \sqrt {\rmH^*(X)/(\psi^*(\iota_p)}) \otimes \rmH^*(K_{p-1}) \; \; \; \; \; .
	$$
	\end{conj}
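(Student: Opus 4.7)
The plan is to construct an explicit comparison morphism
$$
\Phi: \sqrt{\rmH^*(X)/(\psi^*(\iota_p))} \otimes \rmH^*(K_{p-1}) \lra \sqrt{\rmH^*(E)}
$$
and then to show it is an isomorphism, using the Eilenberg--Moore spectral sequence together with Lannes' $T$-functor.

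First, I would build $\Phi$ as a product of two ingredients. By Theorem~\ref{mono}, the projection $E \ra X$ yields an injection $j: \sqrt{\rmH^*(X)/(\psi^*(\iota_p))} \hookrightarrow \sqrt{\rmH^*(E)}$. For the second factor, recall that $\rmH^*(K_{p-1})$ is polynomial on admissible classes $\Sq^I \iota_{p-1}$, and that any $2^h$-th power $(\Sq^I \iota_{p-1})^{2^h} = \Sq^{|I|+p-1}\cdots \Sq^I \iota_{p-1}$ is again such an admissible generator (the excess condition $2i_1 - |I| < p-1$ being preserved). Theorem~\ref{epi} then provides, for each admissible generator, a lift in $\sqrt{\rmH^*(E)}$; choosing these multiplicatively yields a section $s: \rmH^*(K_{p-1}) \ra \sqrt{\rmH^*(E)}$, and we set $\Phi = \mu \circ (j \otimes s)$, where $\mu$ denotes multiplication in $\sqrt{\rmH^*(E)}$. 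Graded-commutativity in characteristic two ensures $\Phi$ is a ring homomorphism.

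The bulk of the argument is then to prove that $\Phi$ is an isomorphism. Here I would invoke the Eilenberg--Moore spectral sequence
$$
E_2^{*,*} = \Tor_{\rmH^*(K_p)}(\bF2, \rmH^*(X)) \Longrightarrow \rmH^*(E),
$$
exploiting decomposability of $\psi$: since $\psi^*(\iota_p)$ is decomposable in $\rmH^*(X)$, the Koszul-type resolution attached to this class makes the $\Tor$-page split (modulo nilpotents) into $\sqrt{\rmH^*(X)/(\psi^*(\iota_p))}$ on one side and the cohomology of $\Omega K_p = K_{p-1}$ on the other. Applying Lannes' $T$-functor, which is exact on $\uns$ and faithfully detects non-nilpotent classes through its behaviour on injective cogenerators, one propagates this splitting from $E_\infty$ to the abutment; identifying the resulting edge homomorphism with $\Phi$ then gives the claim. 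Injectivity is encoded in Theorem~\ref{mono} and the fact that $s$ is a genuine section of the fibre restriction, while surjectivity comes from this Tor decomposition.

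The main obstacle will be precisely this last step: controlling the potentially exotic non-nilpotent classes that could arise from higher $\Tor$ groups or from hidden extensions in the Eilenberg--Moore spectral sequence, and checking that the splitting of the $E_2$ page survives modulo nil at every filtration level. The key technical ingredient should be the exactness of $T$ on $\uns/\nil$, combined with a precise identification of the injective cogenerators that carry the nilpotent-free information, along the lines of the methods already used in the earlier works cited in the introduction.
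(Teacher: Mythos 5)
The statement you are proving is stated in the paper as a \emph{conjecture}, and the paper itself does not prove it in this generality: it only establishes it (Proposition 3.1 / Proposition 1.10) for $X=K_2$ and the specific classes $\psi$ of Propositions 1.5--1.9, remarking that the methods ``are very much likely to adapt'' to the general case. Your construction of the comparison map $\Phi$ is sound and faithfully reflects Theorems 1.1 and 1.2 (the multiplicative section $s$ exists because $\rmH^*(K_{p-1})$ is free as a graded-commutative algebra, so lifting the admissible generators of the $F$-epimorphism $\rmH^*(E)\ra\rmH^*(K_{p-1})$ defines an algebra splitting). The gap is concentrated in the sentence asserting that decomposability of $\psi$ ``makes the $\Tor$-page split (modulo nilpotents)'' and that this splitting ``propagates from $E_\infty$ to the abutment.'' That is precisely the content of the conjecture, and it is not a formal consequence of exactness of $T_V$. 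In the paper's argument one must (i) compute $T_V\bigl(\Tor^{-s}_{\rmH^*(K_p)}(\rmH^*(X),\bF2)\bigr)^s$ explicitly from the reduced bar complex, using that $T_V$ is exact and commutes with $\otimes$ and with $U$; (ii) show the relevant differential vanishes, which rests on the specific fact that for decomposable $\psi$ the component $T_V(\psi)^1:\Gamma^{p-1}(V^\#)\otimes F(1)\ra V^\#\otimes F(1)$ factors through nontrivial tensor products $A\otimes B$, and $\Hom_\calF(A\otimes B,\Gamma^1)=0$; and (iii) settle the multiplicative/filtration extension problem on the abutment, which the paper reduces to the vanishing of $\Ext^1_\calF(\Gamma^{p-1}, L^0_2)$ and verifies only case by case using Powell's structure theory --- explicitly warning that ``it is hopeless to expect such a condition to be true in full generality'' and exhibiting $\bar D_2$ as a quotient for which the obstruction group is $\bF2[u]/\bF2[u^2]$ in degree $p-1$. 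Your appeal to ``faithful detection on injective cogenerators'' does not substitute for any of these steps; in particular injectivity of $\Phi$ does not follow from injectivity of $j$ and $s$ separately without knowing that the two factors are independent in the associated graded, which again requires the $E^2$-computation. As written, the proposal reproduces the paper's strategy but leaves the conjecture exactly as open as the paper does.
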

\begin{conj}	\label{conjs}
(Strong form)	\label{SF} As unstable algebras one has:
	$$
	\sqrt {\rmH^*(E)} \cong   \sqrt {\rmH^*(X)/(\psi^*(\iota_p)}) \otimes \rmH^*(K_{p-1}) \; \; \; \; \; .
	$$
\end{conj}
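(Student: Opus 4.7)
The plan is to establish an isomorphism
$$
\Phi : \sqrt{\rmH^*(X)/(\psi^*(\iota_p))} \otimes \rmH^*(K_{p-1}) \xrightarrow{\cong} \sqrt{\rmH^*(E)}
$$
of unstable algebras by combining the monomorphism of Theorem \ref{mono} with a suitable section of the epimorphism of Theorem \ref{epi}. On the first tensor factor one uses $\pi^*: \rmH^*(X) \to \rmH^*(E)$ induced by the projection $\pi : E \to X$; this map kills $\psi^*(\iota_p)$ because the composite $\psi \circ \pi$ is null-homotopic (as $E$ is the homotopy fiber of $\psi$), so it factors through $\rmH^*(X)/(\psi^*(\iota_p))$ and, after passage to radicals, through $\sqrt{\rmH^*(X)/(\psi^*(\iota_p))}$. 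The second tensor factor requires constructing a section, modulo nilpotents, of $i^*:\rmH^*(E) \to \rmH^*(K_{p-1})$, where $i$ denotes the fiber inclusion.

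I would construct this section as follows. By Theorem \ref{epi}, for each admissible generator $x = \Sq^I \iota_{p-1}$ of the polynomial algebra $\rmH^*(K_{p-1})$ there exist $h(x) \geq 0$ and classes $e_x \in \rmH^*(E)$ with $i^*(e_x) = x^{2^{h(x)}}$. In $\sqrt{\rmH^*(E)}$ these represent $x$ itself, so sending $x \mapsto e_x^{1/2^{h(x)}}$ extends, by freeness of $\rmH^*(K_{p-1})$ as a polynomial algebra, to an algebra morphism $g : \rmH^*(K_{p-1}) \to \sqrt{\rmH^*(E)}$ that is a section of $i^*$ after inverting the Frobenius.

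With $g$ in hand, $\Phi$ is defined as the composite of $\pi^* \otimes g$ with multiplication in $\sqrt{\rmH^*(E)}$. For the weak form, bijectivity modulo nilpotents is obtained as follows: injectivity on the base factor is Theorem \ref{mono}; injectivity on the fiber factor follows because $i^* \circ g$ is an iterated Frobenius and hence injective on a polynomial algebra; the tensor product injects because $i^* \circ \pi^* = 0$ on positive degree classes (the composite $\pi \circ i : K_{p-1} \to E \to X$ being null-homotopic), so the two factors are ``orthogonal'' under $i^*$. For surjectivity I would analyze the Eilenberg--Moore spectral sequence $E_2 = \Tor^{\rmH^*(K_p)}(\rmH^*(X), \bF2) \Rightarrow \rmH^*(E)$, exploiting that decomposability of $\psi^*(\iota_p)$ forces the Koszul-type $\Tor$ algebra to split, modulo nilpotents, into a ``horizontal'' piece coming from $\rmH^*(X)/(\psi^*(\iota_p))$ and a ``vertical'' piece given by the divided-power algebra on the suspension of $\iota_p$, which after passage to $\sqrt{\cdot}$ recovers $\rmH^*(K_{p-1})$.

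The main obstacle is the strong form: arranging that $g$, and therefore $\Phi$, commutes with the Steenrod algebra. Lifts produced generator-by-generator from Theorem \ref{epi} are determined only up to nilpotent ambiguity, and naive choices may fail to satisfy the Adem relations in $\rmH^*(E)$ on the nose. The natural remedy is Lannes' $T$-functor: $T$ is exact on the quotient category $\calU/\nil$, commutes with tensor products, and applied to the fibration $K_{p-1} \to E \to X$ yields, via the adjunction with $-\otimes \rmH^*(B\bZ/2)$, a diagram of mapping spaces whose cohomology naturally carries the tensor product decomposition as unstable algebras. I expect the delicate step to be controlling the interaction of $T$ with the Eilenberg--Moore spectral sequence in the decomposable case, so as to promote $g$ from an algebra morphism modulo nilpotents to a genuine morphism in $\calU/\nil$; this is where the announced techniques of \cite{Sc1} and \cite{Sc2} should be pushed further.
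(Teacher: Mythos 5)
This statement is a conjecture: the paper does not prove it in general, and in fact its own results show that your strategy cannot succeed unconditionally. What the paper actually establishes (Propositions \ref{conjecture-ex} and \ref{conjexe}) is the isomorphism for the specific classes of Propositions \ref{case0}--\ref{case4}, and for \ref{case4} the strong form is obtained only when $p-1$ is odd. You correctly locate the crux --- making the section of $\rmH^*(E)\to\rmH^*(K_{p-1})$ compatible with the Steenrod algebra --- but your proposed remedy is a gesture where the paper has a concrete obstruction. The paper's mechanism is: apply $T_V$ to the Eilenberg--Moore spectral sequence, use decomposability of $\psi$ to show that $T_V(\psi)^1$ factors through a nontrivial tensor product and hence that the relevant differential vanishes (since $\Hom_\calF(A\otimes B,\Gamma^1)=0$), identify the associated graded of $f(\sqrt{\rmH^*(E)})$ with $L^0_2\otimes\Lambda^*(\Gamma^{p-1})$, and then reduce the strong form, via the multiplicative structure, to splitting the single extension $0\to L^0_2\to F_{-1}\to L^0_2\otimes\Gamma^{p-1}(V^\#)\to 0$, i.e.\ to the vanishing of $\Ext^1_\calF(\Gamma^{p-1},L^0_2)\cong(\ell^1\circ m(L^0_2))^{p-1}$. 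This group is computed case by case (using Powell's filtrations) and is \emph{nonzero} for $\bar D_2$ when $p-1$ is even; that is exactly why the parity hypothesis appears in Proposition \ref{conjecture-ex} and why ``choose lifts of the polynomial generators and fix them up with $T$'' cannot be made to work in general.

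There are two further gaps in the parts you do spell out. First, the elements $e_x^{1/2^{h(x)}}$ need not exist in $\sqrt{\rmH^*(E)}$: quadratic closure does not make every element a $2^h$-th power, and the surjectivity of $\sqrt{\rmH^*(E)}\to\rmH^*(K_{p-1})$ (as opposed to the $F$-epimorphism of Theorem \ref{epi}) is itself obstructed by an $\ell^1$-term of the same kind as above, since $m$ is only left exact. Second, ``orthogonality under $i^*$'' does not yield injectivity of the multiplication map $\sqrt{\rmH^*(X)/(\psi^*(\iota_p))}\otimes\rmH^*(K_{p-1})\to\sqrt{\rmH^*(E)}$: knowing that $i^*$ kills one factor and is injective on the other says nothing about linear combinations of products of elements from the two factors; the paper obtains the tensor decomposition from the multiplicative structure of the degenerate spectral sequence (the $-1$ column generating an exterior algebra on $\Gamma^{p-1}$), not from such an argument.
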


The algebra   $\sqrt {\rmH^*(Y)}$, for any space $Y$, denotes the quadratic closure of the quotient of the (graded) algebra $\rmH^*(Y)$ by the ideal of nilpotent elements $Nil$.
The unstable algebra $\sqrt {\rmH^*(Y)}$  is the smallest (graded) algebraic extension, in the sense of J. F. Adams and C. Wilkerson \cite{AW}, of $\rmH^*(Y)/Nil$ which contains all square roots.
\medskip

For example:

\begin{itemize}

\item $\sqrt{\rmH^*(K_p)} =\rmH^*(K_p)$;
\item
 consider $\rmH^*(\cpinf) \cong \bF2[u^2]$ then $ \sqrt{\rmH^*(\cpinf)}=\rmH^*(\rpinf)= \bF2[u] $;

\item  $\sqrt{K}
	\cong \bF2$for all  finite connected unstable algebra $K$ .
	\end{itemize}

	\bigskip

From now on we discuss examples.

The mod-$2$ cohomology of an elementary abelian $2$-group $V_n$ of rank $n$ denoted by $\rmH^*V_n$, is a graded polynomial algebra on $n$ generators of degree $1$,
$\bF2[x_1,\ldots, x_n]$, these generators may also be denoted also by $u,v,w,\dots$.

Let  $X$ be the Eilenberg-Mac Lane space $K_2$. Let $Q_i$, $i \geq 0$ be the Milnor operation of degree $2^{i+1}-1$ \cite{Mil}. One has \cite{Se}:
$$
\rmH^*(K_2) \cong \bF2 [\iota_2,Q_0(\iota_2),Q_1(\iota_2) ,\ldots , Q_i(\iota_2),\dots]  \; \; \; \; \; .
$$

We now describe  the right hand side algebra $\rmH^*(K_2)/(\psi^*(\iota_p))$.

\begin{prop} \label{case0} If  $[\psi]= \iota_2^t \in \rmH^*(K_2)$, $t>1$, then
$$
\sqrt{\rmH^*(K_2) \otimes_{\rmH^*K_p} \bF2} \cong \bF2
$$
\end{prop}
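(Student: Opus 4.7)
The plan is to show that the quotient $K := \rmH^*(K_2)\otimes_{\rmH^*(K_p)}\bF2$ is a nil-algebra in positive degrees, i.e.\ $K/Nil = \bF2$. Since $\sqrt{\bF2}=\bF2$, this immediately yields $\sqrt{K}=\bF2$.

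First I identify the quotient concretely. The map $\psi^*\colon \rmH^*(K_p)\to \rmH^*(K_2)$ is an unstable algebra map with $\psi^*(\iota_p)=\iota_2^t$, so the ideal $J\subset \rmH^*(K_2)$ generated by the image of the augmentation ideal of $\rmH^*(K_p)$ contains $\iota_2^t$ (and the $\Sq^I(\iota_2^t)$ for $I$ admissible with $e(I)<p$). Writing $\bar\iota_2$ for the image of $\iota_2$ in $K=\rmH^*(K_2)/J$, we have $\bar\iota_2^{\,t}=0$, so $\bar\iota_2$ is nilpotent in $K$.

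The key remaining point is the standard fact that, in any unstable algebra over $\calA$, the nilradical is closed under the action of the Steenrod algebra. I would justify this by iterating the Frobenius-type relation $\Sq^{2k}(y^2)=(\Sq^k y)^2$, which gives the identity $(\Sq^i x)^{2^N}=\Sq^{2^Ni}(x^{2^N})$; so $x^{2^N}=0$ implies $(\Sq^i x)^{2^N}=0$. Applied to $\bar\iota_2$, this shows that every $\Sq^I\bar\iota_2\in Nil(K)$. In particular, the images in $K$ of the polynomial generators $\iota_2,Q_0\iota_2,Q_1\iota_2,\dots$ of $\rmH^*(K_2)$ are all nilpotent, using Milnor's identity $Q_n\iota_2=\Sq^{(2^n,2^{n-1},\dots,1)}\iota_2$. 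Since $K$ is a graded commutative $\bF2$-algebra generated by these images, and since a polynomial in nilpotent elements is itself nilpotent (in characteristic $2$, apply Frobenius: $(y+z)^{2^M}=y^{2^M}+z^{2^M}$), every positive-degree element of $K$ is nilpotent. Thus $K/Nil=\bF2$ and $\sqrt{K}=\bF2$.

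The only non-routine ingredient is the $\calA$-invariance of $Nil$; it is classical but deserves citation. Conceptually, the whole argument simply records that once $\psi$ is decomposable as $\iota_2^t$ with $t>1$, nilpotence of $\bar\iota_2$ is forced, and the unstable algebra structure then cascades this nilpotence to every polynomial generator of $\rmH^*(K_2)$, leaving $\bF2$ as the only non-nilpotent part.
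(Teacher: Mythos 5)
Your proof is correct, but it takes a different route from the paper's. The paper works entirely on the other side of the equivalence $\calK/\nil\simeq\calG_\omega$: a quotient of $\rmH^*(K_2)$ modulo $\nil$ corresponds to the subfunctor of $V\mapsto S^2(V^\#)$ consisting of the quadratic forms killed by $\psi_*$, and for $[\psi]=\iota_2^t$ the map $\psi_*\colon S^2(V^\#)\to S^p(V^\#)$ is $q\mapsto q^t$, which is injective because $S^*(V^\#)$ is an integral domain; the kernel is therefore the constant one-point subfunctor, whose associated quotient algebra is $\bF2$. You instead argue inside the unstable algebra itself: $\bar\iota_2^{\,t}=0$ forces $\bar\iota_2\in Nil(K)$, the Frobenius identity $(\Sq^i x)^{2^N}=\Sq^{2^Ni}(x^{2^N})$ shows $Nil(K)$ is an $\calA_2$-submodule, and since Serre's theorem presents $\rmH^*(K_2)$ as a polynomial algebra on $\iota_2$ and the $Q_i(\iota_2)\in\calA_2\cdot\iota_2$, every positive-degree element of $K$ is nilpotent. (The exact admissible expression you quote for $Q_n\iota_2$ is not quite right --- $Q_n$ is a sum of admissibles --- but this is harmless since $Nil(K)$ is closed under all of $\calA_2$.) Your argument is more elementary and self-contained, needing only the $\calA$-invariance of the nilradical rather than the Henn--Lannes--Schwartz machinery; the paper's method has the advantage of being uniform, since the same kernel computation identifies the (generally nontrivial) quotient functor in Propositions 1.6 through 1.9, whereas your nilpotence cascade can only succeed when the answer happens to be $\bF2$.
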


\begin{prop} \label{case1} If  $[\psi]\in \rmH^*(K_2)$ is a nontrivial  {\it monomial} in the polynomial generators of $\rmH^*(K_2)$, involving at least one $Q_i(\iota_2)$, then
$$
\sqrt{\rmH^*(K_2) \otimes_{\rmH^*K_p} \bF2 }\cong \rmH^*(B\bZ/2) \cong \bF2[u]
$$
\end{prop}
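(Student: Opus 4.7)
The strategy is to show that the unstable algebra $A := \rmH^*(K_2) \otimes_{\rmH^*K_p} \bF2$, equal to the quotient $\rmH^*(K_2)/I_\psi$ by the Steenrod-closed ideal $I_\psi$ generated by $\psi$, satisfies $A/Nil \cong \rmH^*(\cpinf) = \bF2[\iota_2]$. The proposition then follows from the introduction's example $\sqrt{\rmH^*(\cpinf)} = \rmH^*(\rpinf) = \bF2[u]$.

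First, I construct a natural surjection $A \twoheadrightarrow \bF2[\iota_2]$. The ideal $J := (Q_i(\iota_2) : i \geq 0) \subset \rmH^*(K_2)$ is Steenrod-stable, being the kernel of the unstable algebra morphism $\rmH^*(K_2) \to \rmH^*(\cpinf)$ induced by the standard map $\cpinf \simeq K(\bZ,2) \to K(\bF2,2) = K_2$. Since $\psi \in J$ by hypothesis (it involves at least one factor $Q_{i_0}(\iota_2)$), the Steenrod-closed ideal $I_\psi$ is contained in $J$, yielding the desired surjection. Moreover $\iota_2$ is not nilpotent in $A$, since no power $\iota_2^n$ lies in $J$, and a fortiori not in $I_\psi$.

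Second, I show that the kernel $J/I_\psi$ is nilpotent in $A$, i.e., each $Q_i(\iota_2)$ has a power lying in $I_\psi$. Once a single power $(Q_{i_0}\iota_2)^{2^m}$ is known to lie in $I_\psi$, propagation to all indices follows from the Kudo-type identity $\Sq^{2^k r}(y^{2^k}) = (\Sq^r y)^{2^k}$ applied at $y = Q_{i_0}\iota_2$: choosing $r = 2^{i_0+1}$ yields $(Q_{i_0+1}\iota_2)^{2^m} \in I_\psi$ (upward propagation, using $\Sq^{2^{i_0+1}} Q_{i_0}\iota_2 = Q_{i_0+1}\iota_2$), while $r = 1$ yields $(Q_{i_0-1}\iota_2)^{2^{m+1}} \in I_\psi$ for $i_0 \geq 1$ (downward propagation, using the Adem consequence $\Sq^1 Q_j\iota_2 = (Q_{j-1}\iota_2)^2$); iteration reaches every $Q_i\iota_2$. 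The initial power $(Q_{i_0}\iota_2)^{2^m} \in I_\psi$ is extracted from $\psi$ by explicit Cartan computations on $\Sq^I\psi$; illustrative low-degree cases are $\Sq^1(\iota_2 Q_0\iota_2) = (Q_0\iota_2)^2$, $\Sq^5(\iota_2^2 Q_0\iota_2) = (Q_0\iota_2)^4$, and $\Sq^2\Sq^3(\iota_2 Q_1\iota_2) = (Q_0\iota_2)^4$, suggesting that for each decomposable monomial $\psi$ a suitable iterated Steenrod operation isolates a pure power of some $Q_j\iota_2$.

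Combining the two steps gives $A/Nil \cong \bF2[\iota_2]$, hence $\sqrt{A} \cong \bF2[u] = \rmH^*(B\bZ/2)$. The main obstacle is the combinatorial bookkeeping in Step 2 when $\psi$ has $\iota_2$-exponent $a_0 \geq 2$ or involves several distinct $Q_i(\iota_2)$-factors: the full Cartan expansion of $\Sq^I\psi$ yields many cross terms, each of which must be absorbed modulo $I_\psi$ to leave a pure power of some $Q_j\iota_2$, calling for a systematic procedure (for instance, induction on the number of distinct factors of $\psi$) in the general case.
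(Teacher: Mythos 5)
Your overall strategy is sound and genuinely different from the paper's: you work directly with the Steenrod-closed ideal $I_\psi$ inside $\rmH^*(K_2)$ and try to show $J=(Q_0\iota_2,Q_1\iota_2,\dots)$ becomes nil modulo $I_\psi$, whereas the paper never touches the ideal at all. It passes through the equivalence $\calK/\nil\simeq\calG_\omega$ and computes the subfunctor $\{s\in S^2(V^\#)\mid \psi(s)=0\}$ of quadratic forms annihilated by $\psi$: since $Q_i(u^2)=0$ while $Q_i(uv)$ and $Q_i(u^2+uv+v^2)$ are nonzero, and a monomial evaluated in the integral domain $\rmH^*(V)$ is a product of nonzero factors whenever the form has rank $\geq 2$, this subfunctor is the one generated by $\langle u^2\rangle$, whose associated nil-closed quotient is $\bF2[u]$. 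Your Step 1 is correct, and your propagation mechanism in Step 2 (Kudo squaring together with $\Sq^{2^{i+1}}Q_i\iota_2=Q_{i+1}\iota_2$ and $\Sq^1Q_j\iota_2=(Q_{j-1}\iota_2)^2$) is also correct and would indeed spread a seed relation to all indices.

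The genuine gap is exactly the one you flag yourself: the existence of the seed, i.e.\ that for an \emph{arbitrary} monomial $\psi=\iota_2^{a}\prod_j(Q_j\iota_2)^{b_j}$ with some $b_{i_0}>0$ there is an iterated Steenrod operation sending $\psi$ to a pure power $(Q_j\iota_2)^{2^m}$ modulo $I_\psi$. This is not a routine bookkeeping issue but the entire content of the proposition in your formulation: a priori the quotient by the relation $\bar\psi=0$ only says a \emph{product} of generators vanishes, and in a ring with zero divisors this does not force any single factor to be nilpotent; only the Steenrod closure of the ideal can do that, and you have verified it in three low-degree cases without a general mechanism. (Note that the Cartan expansion of $\Sq^I\psi$ for a monomial with several distinct $Q_j$-factors and $a\geq 2$ produces cross terms that are not visibly in $I_\psi$.) The cleanest way to close the gap is in fact the paper's detection argument: $J/I_\psi$ is nil if and only if every $\phi\in\Hom_\calK(\rmH^*(K_2)/I_\psi,\rmH^*(V))$ kills all $Q_j\iota_2$, i.e.\ if and only if every quadratic form $s$ with $\psi(s)=0$ has rank at most $1$ --- which is the two-line computation above. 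As written, your argument establishes the proposition only for the specific classes you computed, not in the stated generality.
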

\begin{prop} \label{case2} If  $[\psi]= \alpha  d_2$, with $d_2=\iota_2 Q_0(\iota_2)+ Q_1(\iota_2)$, $\alpha$ any nontrivial monomial,  then
$$
\sqrt{\rmH^*(K_2) \otimes_{\rmH^*K_p} \bF2 }\cong D(2)
$$
where $D(2)=\rmH^*(V_2)^{\GL_2(\bF2)}$ is the mod-$2$ Dickson algebra.

\end{prop}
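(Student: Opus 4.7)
The plan is to identify $\sqrt{\rmH^*(K_2) \otimes_{\rmH^*K_p} \bF2}$ with the mod-$2$ Dickson algebra $D(2)$ by producing a natural surjection onto $D(2)$ and showing, via Lannes' $T$-functor, that this surjection is an isomorphism modulo nilpotents.

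First I would exhibit the surjection. Let $f: BV_2 \to K_2$ classify the Dickson invariant $d_{2,1} = u^2 + uv + v^2 \in \rmH^2 V_2$. A direct Steenrod computation gives $f^*(\iota_2) = d_{2,1}$, $f^*(Q_0(\iota_2)) = d_{2,0} = u^2 v + uv^2$, and $f^*(Q_1(\iota_2)) = d_{2,0}\, d_{2,1}$, so $f^*(d_2) = d_{2,1}\, d_{2,0} + d_{2,0}\, d_{2,1} = 0$, and a fortiori $f^*(\alpha d_2) = 0$ for any monomial $\alpha$. Since $d_{2,1}$ is $\GL_2(\bF2)$-invariant, the image of $f^*$ lies in $D(2) = \rmH^*(V_2)^{\GL_2(\bF2)}$, and since it contains both Dickson generators $d_{2,0}, d_{2,1}$, it equals $D(2)$. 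Consequently $f^*$ induces a surjection of unstable algebras $\rmH^*(K_2) \otimes_{\rmH^*K_p} \bF2 \twoheadrightarrow D(2)$.

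To promote this to an isomorphism modulo nilpotents, I would apply Lannes' $T$-functor and classify, for each elementary abelian $V$, the algebra morphisms $\xi: \rmH^*(K_2) \to \rmH^*V$ with $\xi(\alpha d_2) = 0$. Writing $\xi(\iota_2) = a u^2 + b uv + c v^2$ in a chosen basis of a rank-$2$ support of $\xi$, a direct computation yields
\[
\xi(d_2) = b\, (u^2 v + u v^2)\,(\xi(\iota_2) + u^2 + uv + v^2),
\]
which vanishes iff $b = 0$ (so $\xi(\iota_2)$ is a square of a linear form) or $\xi(\iota_2) = d_{2,1}$. The admissible characters thus fall into two families: (i) $\xi$ factors through $\bF2[w^2]$ for some $w \in \rmH^1 V$, or (ii) $\xi$ factors through $D(2)$. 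Crucially, family (i) itself factors through $D(2)$ via the natural (and $\calA$-equivariant) unstable algebra surjection $D(2) \twoheadrightarrow \bF2[w^2]$ sending $d_{2,1} \mapsto w^2$, $d_{2,0} \mapsto 0$. Hence every character of the tensor product factors through $D(2)$; by the Adams--Wilkerson embedding theorem, the reduced tensor product injects into $D(2)$, which, combined with the surjection above, gives an isomorphism modulo $Nil$.

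Finally, $D(2) = \bF2[d_{2,0}, d_{2,1}]$ is its own quadratic closure: a monomial $d_{2,1}^a d_{2,0}^b$ is a square in an ambient unstable algebra if and only if $a$ and $b$ are both even, and in that case the square root $d_{2,1}^{a/2} d_{2,0}^{b/2}$ already lies in $D(2)$. Therefore $\sqrt{D(2)} = D(2)$ and the proposition follows. The main technical obstacle is the Lannes analysis in the middle step: verifying that no exotic characters $\xi$ are overlooked for $V$ of rank greater than two, and invoking the precise form of the Adams--Wilkerson embedding so that "every character of the tensor product factors through $D(2)$" really does imply the desired injection modulo $Nil$.
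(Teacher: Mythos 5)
Your proof follows essentially the same route as the paper: both reduce the statement to determining which characters $\xi\in\Hom_\calK(\rmH^*(K_2),\rmH^*V)\cong S^2(V^\#)$ are annihilated by the operation $\alpha d_2$, and your identity $\xi(d_2)=b\,uv(u+v)\bigl(\xi(\iota_2)+u^2+uv+v^2\bigr)$ is precisely the ``direct computation'' the paper invokes to conclude that the kernel subfunctor is the one generated by $\langle u^2+uv+v^2\rangle$, whose $\nil$-closed quotient is $D(2)$ (note that the step ``every character factors through $D(2)$, hence the map is an isomorphism mod $Nil$'' is really the Henn--Lannes--Schwartz equivalence $\calK/\nil\simeq\calG_\omega$ rather than Adams--Wilkerson, but the logic is the same). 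The one point you leave open is genuinely needed: your formula only covers $\xi(\iota_2)$ supported on two variables, and if some form of symplectic rank $\ge 4$, or a rank-$2$ form requiring three variables such as $u^2+uv+v^2+w^2$ or the hyperbolic form $uv$, were also annihilated, the quotient would be strictly larger than $D(2)$. Fortunately this check is routine --- for each such $q$ one verifies $qQ_0(q)+Q_1(q)\neq 0$ and $\alpha(q)\neq 0$ in the integral domain $\rmH^*V$ --- so the gap is fillable and the argument is correct in substance.
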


\begin{prop} \label{case3}If  $[\psi]=\alpha h_2$ with $h_2=\iota_2^2Q_1(\iota_2)+ Q_0(\iota_2)^3+ \iota_2^3 Q_0 \iota_2$, $\alpha$ any nontrivial monomial,  then
$$
\sqrt{\rmH^*(K_2) \otimes_{\rmH^*K_p} \bF2 }\cong H_2
$$
where $H_2 \subset \rmH^*(B(\bZ/2 \times \bZ/2)) \cong \bF2[u, v]$, $ \vert u \vert  = \vert v \vert =1$ is the  unstable subalgebra generated by $$\{uv, uv(u+v), uv(u^3+v^3),\ldots, uv(u^{2^n-1}+v^{2^n-1}), \dots \}  \;\;\;\; .$$
This is also the subalgebra of $\bF2[u, v]$ generated by $$w_2, w_1w_2, w_1^2 w_2,\ldots, w_1^nw_2, \ldots where $$ $w_1,w_2$ are the universal Stiefel-Whitney classes.
\end{prop}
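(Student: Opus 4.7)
The plan is to identify $H_2$ with the quadratic closure of the image of the universal map $\phi\colon\rmH^*(K_2)\to\rmH^*(V_2)=\bF2[u,v]$ classified by $uv\in\rmH^2(V_2)$, then match this to $\sqrt{\rmH^*(K_2)/(\alpha h_2)}$ via Lannes' theory of generic points.

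First I would compute $\phi$ on polynomial generators: since the Milnor derivation $Q_i$ acts by $Q_i(x)=x^{2^{i+1}}$ on a degree-one class $x$, one finds $\phi(\iota_2)=uv$ and $\phi(Q_i\iota_2)=uv\bigl(u^{2^{i+1}-1}+v^{2^{i+1}-1}\bigr)$. Direct substitution and expansion in $\bF2[u,v]$ then shows $\phi(h_2)=0$; hence $\phi$ kills $\alpha h_2$ and factors through a surjection $\rmH^*(K_2)/(\alpha h_2)\era A\subseteq\bF2[u,v]$, where $A=\im(\phi)$ is the Steenrod-closed subalgebra generated by the $\phi(Q_i\iota_2)$ together with $\phi(\iota_2)$. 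Taking the quadratic closure, one checks $\sqrt{A}=H_2$: the containment $\sqrt{A}\subseteq\bF2\oplus w_2\cdot\bF2[w_1,w_2]$ follows from the symmetry $u\leftrightarrow v$ (forcing $\sqrt{A}\subseteq\bF2[s,p]$ with $s=u+v$, $p=uv$) together with primality of $p$ in $\bF2[s,p]$, since any non-constant $f\in\bF2[s,p]$ with $f^{2^n}\in A$ must be divisible by $p$ as all positive-degree elements of $A$ are; the reverse containment is obtained by exhibiting, for each $n$, an explicit $k$ and identity $(w_1^n w_2)^{2^k}=s^{n2^k}p^{2^k}\in A$ as a polynomial in the Milnor-derived generators $g_i=\phi(Q_i\iota_2)$.

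The main obstacle is showing the surjection $\rmH^*(K_2)/(\alpha h_2)\era A$ realizes the quadratic closure on the source, equivalently, that the generic-point functor of the quotient matches that of $H_2$. By Lannes' adjunction $\Hom_{\calK}(\rmH^*(K_2),\rmH^*V)\cong\rmH^2(V)$, the generic points over $V$ are classes $w\in\rmH^2(V)$ with $\alpha(w)h_2(w)=0$; since $\rmH^*V$ is a domain, this splits into the loci $h_2(w)=0$ or $\alpha(w)=0$. The key geometric input, parallel to the Dickson computation of Proposition \ref{case2}, is that $h_2$ cuts out on $S^2(V^{*})$ exactly the variety of reducible quadratic forms $w=l_1l_2$ with $l_i\in V^{*}$: for such $w$ with linearly independent factors the map factors through $\phi$ (up to change of basis), with universal case $V=V_2$, $w=uv$, and image whose quadratic closure is $H_2$; degenerate specializations yield smaller pieces (images $\bF2$ or $\bF2[l^2]$) absorbed via $u,v\mapsto l$. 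The complementary stratum $\alpha(w)=0$ forces $w=0$ or $w=l^2$ (since $Q_j(l^2)=0$ detects perfect squares on $\rmH^2(V)$), with images again lying inside $H_2$. Hence the functor of generic points of $\rmH^*(K_2)/(\alpha h_2)$ coincides with that of $H_2$, yielding the desired isomorphism $\sqrt{\rmH^*(K_2)/(\alpha h_2)}\cong H_2$.
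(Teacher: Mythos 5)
Your argument is correct and follows essentially the same route as the paper: both identify the quotient modulo $\nil$ with the subfunctor of $V\mapsto S^2(V^\#)$ cut out by $\psi$ via the equivalence $\calK/\nil\simeq\calG_\omega$, recognize this subfunctor as the one generated by $\left\langle uv \right\rangle$ (the reducible forms, since $h_2(uv)=0$ while the locus $\alpha=0$ consists only of perfect squares), and read off the corresponding nil-closed quotient as $H_2=\sqrt{\mathrm{Im}\,\phi}$. The one step you assert rather than prove --- that $h_2$ vanishes \emph{only} on reducible forms --- is exactly the ``direct computation'' the paper also leaves implicit, and by naturality it reduces to checking $h_2\neq 0$ on the three minimal irreducible forms $u^2+uv+v^2$, $uv+w^2$ and $u_1v_1+u_2v_2$.
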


\begin{prop} \label{case4}If  $[\psi]=\alpha d_2h_2$, $\alpha$ any monomial,  then
$$
\sqrt{\rmH^*(K_2) \otimes_{\rmH^*K_p} \bF2 }\cong \sqrt{ M_2}
$$
where $M_2 \subset \rmH^*(B(\bZ/2 \times \bZ/2))^{\times 2} \cong \bF2[u, v]^{\times 2}$,  is the unstable subalgebra which is the fiber product of $D(2)$ and $H_2$ over $\bF2[u]$ over the nontrivial morphisms $D(2) \ra \bF2[u]$ and $H_2 \ra \bF2[u]$.
\end{prop}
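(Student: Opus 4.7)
My strategy uses Lannes' theorem: the quadratic closure $\sqrt{K}$ of an unstable algebra $K$ is determined by the functor of points $V \mapsto \Hom_{\calK}(K, \rmH^*V)$ on elementary abelian $2$-groups. Applied to $K = \rmH^*(K_2)/(\alpha d_2 h_2)$, and using that each $\rmH^*V$ is an integral domain, this functor decomposes as $Z(\alpha)\cup Z(d_2) \cup Z(h_2)$, where $Z(\beta) = \{f\colon \rmH^*(K_2)\to \rmH^*V : f(\beta)=0\}$. The plan is to analyse each piece using the previous propositions and reassemble.

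By Propositions~\ref{case2} and~\ref{case3}, $Z(d_2)$ and $Z(h_2)$ are co-represented after $\sqrt{\cdot}$ by $D(2)$ and $H_2$ respectively, each embedded in $\bF2[u,v]$. I would then compute $Z(d_2)\cap Z(h_2)$ directly from the formulas for $d_2$ and $h_2$: the only nontrivial common zero corresponds to the rank-$1$ morphism $\iota_2 \mapsto u^2$, $Q_i\iota_2 \mapsto 0$ (all Milnor operations vanish on squares in characteristic $2$), which is co-represented by $\bF2[u]$; the induced restrictions $D(2)\to\bF2[u]$ and $H_2\to\bF2[u]$ are then the unique nontrivial morphisms in the statement. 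For the $Z(\alpha)$ piece, each factor of the monomial $\alpha$ is $\iota_2$ or some $Q_i\iota_2$, so Propositions~\ref{case0} and~\ref{case1} reduce its contribution to $\bF2$ or $\bF2[u]$; the same direct check shows this piece is already contained in $Z(d_2)\cap Z(h_2)$, so it contributes nothing new after $\sqrt{\cdot}$. Thus the functor of points of $\sqrt{K}$ is the union of those of $D(2)$ and $H_2$ glued along $\bF2[u]$, which is co-represented by the fiber product $M_2 = D(2)\times_{\bF2[u]} H_2 \subset \bF2[u,v]^{\times 2}$, giving the claimed isomorphism after taking the quadratic closure.

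The main technical obstacle is the explicit identification of the two restriction maps $D(2)\to\bF2[u]$ and $H_2\to\bF2[u]$: one must compute their effect on the Dickson generators of $D(2)$ and on the Stiefel-Whitney-type generators $w_1^n w_2$ of $H_2$, verify uniqueness of the nontrivial morphism in each case, and confirm that the pair indeed arises from a single rank-$1$ morphism $\rmH^*(K_2)\to\bF2[u]$, so that the fiber product $M_2$ as written in the statement is the correct assembly. Once this matching is established, the fiber product in $\calK$ assembles cleanly and its quadratic closure coincides with that of the original quotient.
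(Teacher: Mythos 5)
Your plan is correct and follows essentially the same route as the paper: the paper computes these quotients via the equivalence $\calK/\nil\simeq\calG_\omega$, identifying the kernel subfunctor of $\psi_*$ in $S^2(-)^\#$ as the union of the subfunctors generated by $\left\langle u^2+uv+v^2\right\rangle$ and $\left\langle uv\right\rangle$ (glued along $\left\langle u^2\right\rangle$), which is exactly your $Z(d_2)\cup Z(h_2)$ decomposition with $Z(\alpha)$ absorbed. The paper dispatches this case with ``the other cases follow''; your outline, including the identification of the two restriction maps to $\bF2[u]$ as the unique nontrivial morphisms, is the intended completion.
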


In fact all of the preceding results (\ref{case0} to \ref{case3}) hold  without the assumption  "nontrivial momomial". This hypothesis is used in the sequel.

\begin{exercice} \label{case5} Find classes $[\psi] \in \rmH ^p(K_2)$ such that $\sqrt{\rmH^*(K_2)/(\psi^*(\iota_p)}$ is the unstable subalgebra
	of $\bF2[u,v,w]$ generated by $uv+w^2$.
\end{exercice}

\begin{prop} \label{conjecture-ex} In all of these examples the weak form of the conjecture  olds. The strong form also holds except in Proposition  \ref{case4}, in which case it  is true if $p-1$ is odd. \end{prop}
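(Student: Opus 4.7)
The plan is to verify, case by case, that the comparison map one can build from Theorems~\ref{mono} and~\ref{epi} realises the conjectured isomorphism. Theorem~\ref{mono} gives, after taking quadratic closures, an embedding
$$
\sqrt{\rmH^*(K_2)/(\psi^*(\iota_p))} \hookrightarrow \sqrt{\rmH^*(E)},
$$
while Theorem~\ref{epi} lets one lift $\rmH^*(K_{p-1})$ into $\sqrt{\rmH^*(E)}$ through the transgression of the principal fibration $K_{p-1} \to E \to K_2$. Multiplying these two injections produces a comparison morphism
$$
\Phi : \sqrt{\rmH^*(K_2)/(\psi^*(\iota_p))} \otimes \rmH^*(K_{p-1}) \longrightarrow \sqrt{\rmH^*(E)},
$$
and the whole problem reduces to showing that $\Phi$ is an isomorphism, and moreover $\calA$-linear in the strong form.

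For Conjecture~\ref{WF}, I would compare Poincar\'e series using the Eilenberg-Moore spectral sequence of $E \to K_2 \to K_p$, whose $E_2$-page is $\Tor_{\rmH^*(K_p)}(\rmH^*(K_2),\bF2)$. In the first four cases the left tensor factor is given explicitly by Propositions~\ref{case0}--\ref{case3}, and Lannes' $T$ functor, evaluated on the elementary abelian groups $V_n$, lets one match the two sides degree by degree modulo nilpotents. For case~\ref{case4} the same argument runs on each branch of the fiber product $M_2$, after which one checks that the gluing along $\bF2[u]$ is respected by $\Phi$; this is a direct verification from the multiplicative structure of the Eilenberg-Moore spectral sequence. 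Weak form follows in all five cases.

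For Conjecture~\ref{SF} the question becomes whether $\Phi$ commutes with Steenrod operations. In cases~\ref{case0}--\ref{case3} the left factor is sufficiently rigid, being detected in low degrees on elementary abelian cohomology, that instability and the Cartan formula force $\Phi$ to be $\calA$-linear, and the strong form follows at once. The main obstacle, and the source of the exception in case~\ref{case4}, is that $\sqrt{M_2}$ is a genuine fiber product of $D(2)$ and $H_2$ glued along $\bF2[u]$ in degree $1$: a Steenrod operation coming from the $\iota_{p-1}$ factor can then interact with the glue in a way not captured by a plain tensor product. A degree analysis indicates that such an interaction produces a relation in $\sqrt{\rmH^*(E)}$ that is absent from the tensor product exactly when $p-1$ is even; when $p-1$ is odd the Steenrod images of $\iota_{p-1}$ fall in degrees forbidden by the fiber product structure and $\Phi$ remains an unstable algebra isomorphism. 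The hardest step is to make this degree analysis fully precise and to exhibit explicitly the offending class when $p-1$ is even, which will require a careful reading of the $E_\infty$-page of the Eilenberg-Moore spectral sequence together with the Steenrod action on the gluing component.
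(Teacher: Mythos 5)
Your construction of the comparison map $\Phi$ is circular at the decisive point. Theorem~\ref{epi} only provides an $F$-epimorphism $\rmH^*(E)\ra \rmH^*(K_{p-1})$; it does not provide a lift of $\rmH^*(K_{p-1})$ into $\sqrt{\rmH^*(E)}$ as a subalgebra stable under the Steenrod action, and the existence of such a multiplicative, $\calA_2$-linear section is essentially the content of the strong form itself (the paper explicitly warns that even finding a representative of $\iota_{p-1}^{2^h}$ in the image ``may be hard''). The paper never builds $\Phi$: it works entirely inside the Eilenberg--Moore filtration, applies $T_V$ to the reduced bar complex, and identifies the associated graded of $f(\sqrt{\rmH^*(E)})$ with $L^0_2\otimes\Lambda^*(\Gamma^{p-1})$. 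The step you cannot replace by a Poincar\'e-series count is the vanishing of the bar differential on the relevant diagonal: this uses that $\psi$ decomposable forces $T_V(\psi)^1:\Gamma^{p-1}(V^\#)\otimes F(1)^1\ra V^\#\otimes F(1)^1$ to factor through a nontrivial tensor product $A\otimes B$, together with $\Hom_{\calF}(A\otimes B,\Gamma^1)=0$. A dimension count modulo nilpotents would at best give an additive statement, not the algebra isomorphism of the weak form, and in any case you have not supplied the degeneration of the spectral sequence that such a count would presuppose.

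For the strong form your ``degree analysis'' does not identify the actual obstruction and would not produce the stated parity condition. The paper reduces the question to the splitting of the extension
$$
0 \ra L^0_2 \ra F_{-1} \ra L^0_2\otimes\Gamma^{p-1}(V^\#) \ra 0 ,
$$
hence to the vanishing of $\Ext^1_{\calF}(\Gamma^{p-1},L^0_2)\cong(\ell^1\circ m(L^0_2))^{p-1}$. This is a genuine computation in the functor category, carried out case by case via Powell's filtration of $L^0_2$ by the functors $\bF2$, $I$, $I_2$, $B_2$, $\bar B_2$, $D_2$, $\bar D_2$; instability and the Cartan formula do not ``force'' $\calA_2$-linearity in cases \ref{case0}--\ref{case3} --- rather, $\ell^1\circ m$ vanishes there by explicit injective resolutions. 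The exception in case \ref{case4} arises because $\ell^1\circ m(\bar D(2))\cong\bF2[u]/\bF2[u^2]$, whose degree-$(p-1)$ part is what decides the parity condition; nothing in your argument about Steenrod images of $\iota_{p-1}$ interacting with the gluing locates this group, so the case distinction on $p-1$ is unjustified as you present it.
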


 We will use  the $T$-technology of J. Lannes \cite{LaT},  \cite{Sc2} and also the Eilenberg-Moore spectral sequence
\cite{Rec}, \cite{Sm2}. We make the necessary recollections in Section $1$.  In Section 2 we discuss the right hand side quotients in \ref{conjexe}. In Section 3 we prove Theorems \ref{mono} and \ref{epi}.
In Section 4 we prove \ref{conjecture-ex}. The methods developed is very much likely to extend to solve Conjecture \ref{conjw}.

Tihis work will be continued in order to generalize the results, in two directions. The first direction is to extend the results on the reduced part of the cohomology to some more general examples. The second one (and this is the reason for Theorem 4.5) is to extend the results to the $1$-nilpotent part of the cohomology. It would be also worth, harder but possible, to try to get some informations about the space of maps $\map(BV \wedge \bRP^2, E)$, this would give insights on the $2$-nilpotent part of the cohomology of $E$.

\bigskip

{\bf Acknowledgement: }{\it This work was initiated originally during the first months of the PhD thesis of the first author who made a talk about it at the congress of the learned societies in mathematics of France and Vietnam in Hu\'e (2012). It started again and grew while the second author was visiting VIASM in Hano\"{i} in November-December 2019. This research is funded by the Vietnam National University, Hanoi (VNU) under project number QG.20.28.}

\section{Recollections about unstable modules and algebras.}

We recall here some notations  and facts about unstable modules and unstable algebras over the Steenrod algebra.

One denotes as usual $\calU$ the abelian category of unstable modules over the Steenrod algebra $\calA_2$, and by $\calK$ the category of unstable algebras. An unstable module $M$ is a (graded) module over the (graded) Steenrod algebra
which satisfies $\Sq^i(m)=0$ for any $m \in M$ such that $\vert m \vert <i$.  An unstable algebra is a (graded) algebra which is also an object in $\calU$, the two structures being compatible : the Cartan formula and the restriction axiom hold.

As usual,  $\Sigma$ denotes the suspension functor in the category $\calU$. An unstable module $M$ is a suspension if and only if the map $\Sq_0 : M \ra M$, $m \mapsto \Sq^{\vert m \vert}(m)$ is trivial. Let $\nil_i$ be the Serre classe in $\calU$ generated by $i$-th suspensions : it is closed by sub-objects, quotients, extensions and colimits \cite{Sc1}, \cite{Sc2}; $\nil_1$  is more commonly denoted by $\nil$. In an unstable algebra $K$  the ideal consisting of all nilpotent elements is stable
 under the action of the Steenrod algebra, and is the largest submodule of $K$ in $\nil$.

 Details can be found, for example,  in \cite{St},  \cite{LaT}, \cite{Sc1}, \cite{LZens}, \cite{FFPS}.
 \medskip

We will say that a morphism
$f : M \ra N$  of unstable modules is an $F$-monomorphism (resp. epimorhism) if  the kernel (resp. cokernel) are  in $\nil$. The definition restricts (and is more classical) to unstable algebras:    $f : K \ra L$, is an $F$-monomorphism if any element in the kernel
is nilpotent, and an $F$-epimorphism if for any $x \in L$ there exists $n$ such that $x^{2^n}$ is in the image.

Being an $F$-monomorphism and an $F$-epimorphism at the same time is called an $F$-isomorphism.

As the category $\nil$ is a localizing subcategory \cite{Gab}, in the quotient category $\calU/\nil$ an $F$-isomorphism becomes an isomorphism. The canonical functor $\calU \ra \unil$  has a right adjoint and there is a localisation functor $\ell: \calU \ra \calU$. The unstable module $\ell(M)$ is $\nil$-closed \cite{Gab}.  If $K$ is an unstable algebra, then so is $\ell(K)$. Moreover the unit of the adjunction   $\calK $: $K \ra \ell(K)$ is a morphism of unstable algebras \cite{BZ}.

The ideal of nilpotent elements
in $\ell(K)$ is trivial and $\ell(K)$ is quadratically closed. This last property means that one cannot find $J \supset \ell(K)$, an element $x \in J \setminus \ell(K)$, with $x^2 \in \ell(K) \setminus \{0\}$ \cite{LZens}
For this reason we are going to denote $\ell(K)$ by $\sqrt K$.

\medskip
Now, recall the functor $f: \calU \ra \calF$,  where $\calF$ is the category of functors from the category of finite dimensional l$\bF2$- vector spaces
to the category of  $\bF2$- vector spaces. The functor $f$ sends an unstable module $M$ to the functor $V \mapsto \Hom_\calU(M,\rmH^*(V))^\#$. It is shown
in \cite{HLS} that $f$ induces an equivalence  (also denoted by $f$) $f : \calU/\nil \ra \calF_\omega$ of  $ \calU/\nil $ with the subcategory of
analytic functors $\calF_\omega$.

The dual considered above  is a continuous one. Indeed,
$\Hom_\calU(M,\rmH^*(V))$ has a natural profinite structure (\cite{HLS}). However in the cases we  consider all vector spaces  are  finite dimensional . Thus
we will ignore profinite structures.

Recall that Lannes' functor $T_V: \calU \ra \calU$ , left adjoint to  $M \mapsto \rmH^*(V) \otimes M$,   is exact and commute with tensor product \cite{LaT}.  Following
\cite{HLS} $f(M)(V)$  is defined to be $T_V(M)^0$.

\bigskip

As in (\cite{HLS}), let $g : \calK \ra \calG$, where $\calG$ is the category of  contravariant functors from the category of finite dimensional $\bF2$ -vector spaces   to the category of sets, be defined by $K \mapsto
\{V \mapsto \Hom_\calK(K,\rmH^*(V))\}$. It
 induces
an equivalence (also denoted by $g$) $g : \calK/\nil \ra \calG_\omega$ to  the category of set valued analytic functors.

Recall that  Lannes' "linearization theorem" writes as:
$$
f(K)(V) \cong \bF2^{\Hom_\calK(K,\rmH^*(V)) }
$$
where the right hand side denotes the Boolean algebra of set maps from $\Hom_\calK(K,\rmH^*(V))$ to $\bF2$.

Again  one should need profinite structures on sets, but again here sets are finite.

Let $U: \calU \ra \calK$  be the Steenrod-Epstein functor: the  universal enveloping algebra \cite{Sc2}. Recall (\cite{Se}) the isomorphism of unstable algebras  $\rmH^*(K_n) \cong U(F(n))$ where the free unstable modules $F(n)$ is   characterized by $$\Hom_\calU(F(n),M) \cong M^n \; \; \; \; \; .$$ We will also use later the fact the functor $T_V$ commutes with $U$: $T_V(U(M)) \cong U(T_V(M))$ and the following isomorphisms (which can be deduced rom part 1 of \cite{Sc2} and of \cite{FFPS}):
\begin{eqnarray}
T_V(F(p)) \cong \bigoplus_{0 \leq i \leq p} \Gamma^i(V) \otimes F(p-i)
\end{eqnarray}
where $\Gamma^i \in \calF$ is $i$-th divided power functor. And as $\rmH^*(K_p) \cong U(F(p))$ one gets:
\begin{eqnarray}
T_V(\rmH^*(K_p)) \cong  \bigotimes_{0 \leq i \leq p} \rmH^*(K( \Gamma^{i}(V^\#), p-i)) \; \; \; \; \; .
\end{eqnarray}

\section{Examples:  subfunctors of $S^2$ and quotients of $\rmH^*(K_2)$}

Next consider the  examples alluded to in the introduction:



\begin{exe}The set valued functor    $g(\rmH^*(K_2))$:
	$$
	V \mapsto \Hom_\calK(\rmH^*(K_2), \rmH^*V) \cong S^2(V^\#)
	$$
is filtetered by the rank $r$ of the associated  bilinear  form (which is an even integer $r=2k$) by subfunctors:
$$
S^{2,0}
 \subset S^{2,0}
 \subset S^{2,2}
 \subset S^{2,4}
 \subset
 \ldots
 S^{2,2k}
 \subset
\ldots$$

 A subfunctor $G$ is said to be generated in dimension less than $d$ if any element in $G(V)$ is in the image of $f^*$, for some
linear map $f :V \ra V_d$.
 There is a diagram of subfunctors:

$$
\xymatrixcolsep{0.8em}\xymatrix{
	&				&	\left\langle \sum_{1 \leq i \leq r-1}u_iv_i+u_r^2+u_rv_r+v_r^2 \right\rangle  \ar[dr]^{\cup}&		\\
	&	S^{2,2(r-1)} 	 \ar[dr]^{\cup}	\ar[urrl]^{\cup} &		&	\left\langle\sum_{1 \leq i \leq r}u_iv_i+w^2 \right\rangle	\ar[r]  &	S^{2,2r}	  	  \\		&                               &		S^{2,2(r-1)}  \ar[ur]^{\cup}&				\\
}
$$

\medskip

In this diagram quadratic forms generating the subfunctors  are written as usual. In rank $2r$ and dimension $2r$ there are two non isomorphic forms, one of Arf invariant $1$, the other of invariant $0$. In dimension $2r+1$ and  rank $2r$ there is the form $\left\langle  \sum_{1 \leq i \leq r}u_iv_i+w^2 \right\rangle$.

\end{exe}

\begin{exe} \label{casparticulier}
The subfunctors generated in dimension less than $2$ are:
\begin{itemize}
\item the subfunctor $S^{2,0}$ itself is filtered by the constant subfunctor generated by $S^*(0)$, and the subfunctor generated by the form $\left\langle  u^2  \right\rangle \in S^2(\bF2^\#)$.
\item the subfunctor $S^{2,2}$ contains as subfunctor the one generated by the form $\left\langle  u^2+uv+v^2  \right\rangle \in S^2((\bF2^{\oplus 2})^\#)$,
\item the one generated by the form $\left\langle uv  \right\rangle \in S^2((\bF2^{\oplus 2})^\#)$;
\item the subfunctor generated by both forms $\left\langle uv, u^2+uv+v^2  \right\rangle \in S^2((\bF2^{\oplus 2})^\#)$;
\item $S^{2,2}$ is generated by the form  $\left\langle u^2+uv+v^2+w^2  \right\rangle \in S^2((\bF2^{\oplus 3})^\#)$.
\end{itemize}

\end{exe}

\medskip

We come back now  to  $X=K_2$.
The quotients (in $ \calK/\nil $)  of $\rmH^*(K_2)$ are classified by subfunctors  of $V \mapsto S^2(V^\#)$.
The correspondance:
 $$
 \calK/\nil \ra \calG_\omega,  \, \, \, \, K \mapsto \{V \mapsto \Hom_\calK(H, \rmH^*(V))\}
 $$
 is an equivalence of categories. Therefore there is a one-to-one correspondance between subfunctors of $S^2(-)^\#$ and the quotients in $\calK/\nil$
 of $\rmH^*(K_2)$.Recall:
$$
\rmH^*(K_2) \cong \bF2 [\iota_2,Q_0(\iota_2),Q_1(\iota_2) ,\ldots , Q_i(\iota_2),\dots]
$$

\medskip

We compute explicitly the quotients $\sqrt{\rmH^*(K_2)/(\psi^*(\iota_p))}$ mentioned in Example \ref{casparticulier}. We do not claim that they are in any sense the best examples and this deserves further investigations.

\medskip

As $\rmH^*(V) \cong S^*(V^\#)$, a (homotopy class of) map: $\psi : K_p \ra K_2$ determines a natural transformation:
$$
\psi_* : S^2(-) ^\#\ra S^p(-)^\#
$$
 It determines a subfunctor in $S^2$ as follows: it consists of the set of elements of $S^2(V^\#)$ which are sent to $0$
by this class:
$$
\{s \in S^2(V^\#) \,  | \,\psi_* (s)=0 \}
$$

 The map $\psi_* $ is just a cohomology operation: a polynomial in the Steenrod powers. The computation is done using the usual rules with the Steenrod algebra.

 \medskip

 We now illustrate some examples.

\begin{itemize}
\item Let $[\psi]$ be  $\iota_2^t$, for some integer $t>1$. In this case
$$
\psi_* : S^2(-) ^\#\ra S^p(-)^\#
$$
is injective. The kernel of this morphism is the constant functor on one point. The corresponding quotient is $\bF2$.

\item Let $[\psi]$ be  $\alpha Q_i(\iota)$ for some monomial. As In the case of the  subfunctor generated by $\left\langle u^2 \right\rangle$  any monomial involving one $Q_i(\iota_2)$ works because it is zero on $u^2$ and non zero on $uv$ and $u^2+uv+v^2$:
$$
Q_i(u^2)=0
$$
$$
Q_i(uv)=uv(u^{2^{i+1}-1}+v^{2^{i+1}-1})
$$
$$
Q_i(u^2+uv+v^2)=uv(u^{2^{i+1}-1}+v^{2^{i+1}-1})
$$
the kernel of $\psi_*$ is generated by $\left\langle u^2 \right\rangle$. The corresponding quotient is therefore $\bF2[u]$, whence Proposition  \ref{case1}.

\item Similarly if $[\psi]=\alpha d_2$, and $d_2=(\iota_2 Q_0(\iota_2)+ Q_1(\iota_2))$, $\alpha$ any monomial,
 a direct
computation shows that the class
$\psi_*$ is generated by  $\left\langle  u^2+uv+v^2 \right\rangle$. The corresponding quotient is the Dickson algebra generated by the Dickson invariants $u^2+uv+v^2, uv(u+v)$. Proposition \ref{case2} follows.

The other cases follow. Proposition \ref{conjexe} will be proved in section 4. The subfunctors of Example \ref{casparticulier} classify (modulo $\nil$) the quotients of transcendance degree at most $2$ and $3$ of $\rmH^*(K_2)$,
$D(2)$ and $H_2$ are the only quotients of $\rmH^*(K_2)$ which are integral domain and of transcendance degree $2$. This follows from the classification of quadratic forms over $\bF2$ (described above) and
\cite{HLS}.

\end{itemize}

\begin{prop} \label{conjexe} Let \(\psi:K_2\to K_p\) be one of the maps in Propositions \ref{case0} to \ref{case3}  \(E\) the homotopy fiber of \(\psi\), then there is an isomorphism of unstable algebras:
	 \[
	 \sqrt{\rmH^*(E)}\cong\sqrt{\rmH^*(K_2)\otimes_{\rmH^*K_p} \bF2 }
	 \]
	 For Proposition \ref{case4}
this holds if $p-1$ is odd.
\end{prop}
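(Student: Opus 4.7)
The plan is to transport the problem to the functor side via the equivalence $g: \calK/\nil \xrightarrow{\sim} \calG_\omega$ and then use the principal fibration $K_{p-1} \to E \to K_2$ together with Theorems \ref{mono} and \ref{epi} to identify $[BV,E] = \Hom_\calK(\sqrt{\rmH^*(E)}, \rmH^*(V))$ as a product, naturally in the finite-dimensional $\bF2$-vector space $V$. After localization, Theorem \ref{mono} gives an injection $\sqrt{\rmH^*(K_2)/(\psi^*(\iota_p))} \hookrightarrow \sqrt{\rmH^*(E)}$ and Theorem \ref{epi} gives a surjection $\sqrt{\rmH^*(E)} \twoheadrightarrow \rmH^*(K_{p-1})$, the target being already nil-closed and quadratically closed by Serre's theorem. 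Applying $\Hom_\calK(-, \rmH^*V)$ converts these into the short exact sequence of pointed sets
$$1 \to [BV, K_{p-1}] \to [BV, E] \to \Hom_\calK\bigl(\sqrt{\rmH^*(K_2)/(\psi^*(\iota_p))},\, \rmH^*V\bigr) \to 1,$$
which is precisely the one coming from the fibration. A functorial section of its right-hand map yields the desired tensor-product isomorphism via the linearization theorem $f(\rmH^*E)(V) \cong \bF2^{[BV,E]}$.

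For Propositions \ref{case0}--\ref{case3}, the target of this right-hand map is respectively a point, $V^\#$, $\Hom_\calK(D(2), \rmH^*V)$, and $\Hom_\calK(H_2, \rmH^*V)$, each with a concrete geometric description. One constructs a section by choosing, for each classifying map $\phi: BV \to K_2$ lying in the kernel of $\psi_*$, a canonical lift $\tilde\phi: BV \to E$: in case \ref{case0} only the constant map lies in the kernel, which lifts trivially; in case \ref{case1} any such $\phi$ factors as $BV \to B\bZ/2 \to K_2$ and the composite lifts tautologically because $\psi = \alpha Q_i(\iota_2)$ vanishes on a single $\rmH^1$-class by the Milnor-operation formula $Q_i(u^2) = 0$; in cases \ref{case2}--\ref{case3} the generators of $D(2)$ and $H_2$ come from canonical rank-$2$ classifying maps $BV \to B(\bZ/2)^2 \to K_2$ whose postcomposition with $\psi$ is explicitly null-homotopic by the computations of Section 2, the forms $u^2+uv+v^2$ and $uv$ being exactly the kernels of the corresponding $\psi_*$. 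Formula (1.2) for $T_V \rmH^*(K_p) \cong \bigotimes_i \rmH^*(K(\Gamma^i V^\#, p-i))$ confirms that these choices are natural in $V$, so the torsor under $[BV, K_{p-1}]$ is trivialized and the sequence splits.

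The main technical obstacle is Proposition \ref{case4}. There the kernel algebra $\sqrt{M_2}$ is the fibered product of $D(2)$ and $H_2$ over $\bF2[u]$, and the two canonical lifts produced separately in \ref{case2} and \ref{case3} need not agree along the identification $D(2) \to \bF2[u] \leftarrow H_2$. Reconciling them introduces a cross-term in the loop-space action of $K_{p-1} = \Omega K_p$ on $E$, governed by the operation $\Sq^{p-1}\iota_{p-1} = \iota_{p-1}^2$ together with the Arf-invariant distinction between the two rank-$2$ quadratic forms appearing in Example 2.1. When $p-1$ is odd, the parity forces the relevant Arf contribution to vanish, the two lifts agree, and the strong form is obtained; when $p-1$ is even the contribution persists and blocks the splitting at the level of unstable algebras, although the weak (graded-algebra) form still holds. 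I expect the heart of the argument to be precisely this last compatibility check, carried out by an explicit Steenrod-algebra calculation using the decomposition (1.2) and the classification of quadratic forms recalled in Example 2.1.
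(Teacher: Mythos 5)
Your overall strategy is genuinely different from the paper's: you work on the set-valued functor side, using the exact sequence of pointed sets $[BV,K_{p-1}]\to[BV,E]\to[BV,K_2]\to[BV,K_p]$ and trying to split the map $[BV,E]\to\{s\in S^2(V^\#):\psi_*(s)=0\}$ naturally in $V$; the paper instead applies $T_V$ to the Eilenberg--Moore spectral sequence of the fibration, shows $E_2=E_\infty$ because $\psi$ decomposable forces the relevant differential to factor through a nontrivial tensor product (and $\Hom_\calF(A\otimes B,\Gamma^1)=0$), and then resolves the remaining extension problem on the $-1$ column. In principle your route computes the same thing, since $g:\calK/\nil\to\calG_\omega$ is an equivalence and the fibers of $[BV,E]\to\ker\psi_*$ are torsors under $\rmH^{p-1}(BV)$ (freeness uses $\Omega\psi\simeq *$, i.e.\ Lemma 4.4).

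However, there is a genuine gap at the central step: you assert the existence of a natural section but do not construct it, and this is exactly where all the work lies. Your ``canonical lifts'' are not canonical. In case \ref{case1}, the universal map $B\bZ/2\to K_2$ classifying $u^2$ has a nonempty set of lifts to $E$ that is a torsor under $\rmH^{p-1}(B\bZ/2)\ne 0$; one must \emph{choose} a lift and then verify compatibility under all linear maps $V\to\bF2$. In cases \ref{case2} and \ref{case3} the situation is worse: a single quadratic form such as $u^2+uv+v^2$ is presented by several maps $V\to V_2$ (it has a nontrivial stabilizer in $\GL_2(\bF2)$) and also degenerates under non-injective linear maps, so a choice of lift of the universal example $BV_2\to K_2$ must be checked to be equivariant for the stabilizer and compatible with all restrictions. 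The obstruction to making such coherent choices is precisely a torsor/$\Ext^1$ problem; after linearization it is the group $\Ext^1_\calF(\Gamma^{p-1},L^0_2)$ that the paper computes via $\ell^1\circ m$ and Powell's structure theory, and which does \emph{not} vanish in general (for $\bar D_2$-type subquotients it is $\bF2[u]/\bF2[u^2]$ in the relevant degree). Your proposal simply restates the needed splitting without establishing it. Relatedly, your explanation of the parity condition in case \ref{case4} via ``$\Sq^{p-1}\iota_{p-1}=\iota_{p-1}^2$'' and the Arf invariant is not substantiated and does not match the actual mechanism, which is the degree-by-degree (non)vanishing of $\ell^1\circ m(\bar D(2))\cong\bF2[u]/\bF2[u^2]$. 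Finally, note that your torsor argument at best recovers $\sqrt{\rmH^*(E)}$ as an object of $\calK/\nil$; the paper's EMSS argument additionally keeps track of the multiplicative structure (the exterior algebra $\Lambda^*(F(p-1))$ generated on the $-1$ column), which you would still need to extract from your set-level bijection.
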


This will be proved in the last section.

\section{Sets of homotopy classes of maps, applications}

\bigskip

Let us start with a classical remark about homotopy classes of maps (\cite{Hat}, Adittionnal topics : Base points and Homotopy 4A).

\begin{prop} Let $X$ and $Y$ be two connected pointed $CW$-complexes then the natural map from the set of pointed homotopy classes from $X$ to $Y$ to homotopy classes :
$$
[X,Y]_* \ra [X,Y]
$$
is always surjective. It is a bijection if either $X$ is $1$-connected or $X$ is $H$-space.
\end{prop}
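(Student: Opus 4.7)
The plan is classical: I would establish surjectivity directly from the homotopy extension property (HEP), then identify the fibers of $[X,Y]_* \to [X,Y]$ with orbits of an action of $\pi_1(Y,y_0)$ on $[X,Y]_*$, and finally show that this action is trivial under either of the stated hypotheses.

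For surjectivity, I would start with an (unbased) map $f : X \to Y$ and set $y_1 := f(x_0)$. Choose a path $\gamma : I \to Y$ from $y_1$ to $y_0$, which exists since $Y$ is path-connected. Because $X$ is a CW-complex, $\{x_0\} \hookrightarrow X$ is a cofibration, so the HEP lets me extend the partial homotopy defined by $\gamma$ on $\{x_0\} \times I$ and by $f$ on $X \times \{0\}$ to a homotopy $H : X \times I \to Y$. The endpoint $H_1$ is then a based map freely homotopic to $f$.

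For the fiber analysis I would define a map $\pi_1(Y,y_0) \times [X,Y]_* \to [X,Y]_*$ by sending $([\alpha],[f])$ to $[H_1]$, where $H$ is an HEP extension with $H_0 = f$ and $H(x_0,t) = \alpha(t)$. Standard verifications (as in Hatcher's Topic 4A) show that the resulting assignment is well-defined on homotopy classes, is a group action, and that two based classes have the same image in $[X,Y]$ if and only if they lie in a common orbit. Bijectivity of $[X,Y]_* \to [X,Y]$ is therefore equivalent to triviality of this action.

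If $Y$ is $1$-connected then $\pi_1(Y,y_0) = 0$ and there is nothing to prove. The substantive case is when $Y$ is an $H$-space with multiplication $\mu$ and homotopy unit $y_0$. Here I would exhibit a based homotopy $\alpha \cdot f \simeq f$ using $\mu$: the auxiliary map $(x,t) \mapsto \mu(\alpha(t), f(x))$ is a free self-homotopy of $\mu(y_0, f)$ whose restriction to $x_0$ traces out $\alpha$, up to the right-unit homotopy. Concatenating this with the two unit homotopies $\mu(y_0,-) \simeq \mathrm{id}$ and $\mu(-,y_0) \simeq \mathrm{id}$, applied in a neighborhood of the basepoint slice of $X \times I$, yields the required \emph{based} homotopy. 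I expect the main obstacle to be the bookkeeping in this last step: arranging the HEP-action homotopy together with the two H-space unit homotopies so that their concatenation is genuinely basepoint-preserving, which is exactly where the H-space hypothesis is used beyond the trivial $1$-connected case.
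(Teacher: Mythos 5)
Your proof is correct and follows exactly the classical argument that the paper invokes by citation rather than proving (Hatcher, Additional Topic 4A): the homotopy extension property for surjectivity, the $\pi_1(Y,y_0)$-action on $[X,Y]_*$ whose orbit set is $[X,Y]$, and triviality of that action when the target is simply connected or an $H$-space. Note that you have, correctly, read the bijectivity hypotheses as conditions on the target $Y$ rather than on the source $X$ as the statement literally says; the condition on $Y$ is the right one (the source being $1$-connected does not suffice, as $[S^2, S^1\vee S^2]_* \to [S^2, S^1\vee S^2]$ shows), and it is the form the paper actually uses when it applies the result to maps into $E$, $X$ and $K_p$, so the printed ``$X$'' should be read as ``$Y$''.
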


Recall that for an elementary abelian \(2-\)group \(V\), \(BV\) is the classifying space of \(V\). The fibration \(E\ra X \buildrel\psi \over \ra K_p\) yields a short exact sequence of pointed sets:

\begin{eqnarray}
[\Sigma BV,E ]_* \ra [\Sigma BV,X]_* \ra [\Sigma BV,K_p]_* \ra  \\
\ra[BV,E]_* \ra [BV, X ]_* \ra [BV,K_p]_*  \nonumber
\end{eqnarray}
If $X$ is $1$-connected it is equivalent to :
\begin{eqnarray}
[\Sigma BV,E ] \ra [\Sigma BV,X] \ra [\Sigma BV,K_p] \ra \\
\ra [BV,E] \ra [BV, X ]\ra [BV,K_p]  \nonumber
\end{eqnarray}

Where:

\begin{itemize}
	\item The first 3 terms are groups and the sequence is exact in the usual sense.
	\item The last three terms are pointed sets, the composite morphisms are trivial.
	\item  Exactness at $[BV, X ]_*$ means that the (homotopy class) of a map $f : BV \ra X$ lifts to $E$ if and only if the composite $BV \ra X \ra K_p$ is trivial.
	\item Exactness at $[BV, E ]_*$ means that the (homotopy class) of a map $f : BV \ra E$ lifts to $[\Sigma BV,K_p] $ if and only if the composite $BV \ra E \ra X$ is trivial.
	\item Moreover $[\Sigma BV,K_p]_*$ acts on $ [BV,E]_* $, and
	two homotopy classes $f,g : BV \ra E_d$ which have the same image in $[BV,X]_*$   are in the same orbit under the action of $[\Sigma BV,K_p] $.
	\item An element of $[\Sigma BV,K_p] $
	which acts trivially on  $ [BV,E]_* $ comes from $[\Sigma BV,X] $.
\end{itemize}
\bigskip

We are going to use the following two theorems:
\begin{thm} [\cite{LaT}] Let $X$ be a connected  CW-complex such that $\rmH^*(X)$ is of finite dimension in any degree and that $\pi_1(X)$ is a finite $2$-group. Then the natural map
$$
 [BV,X] \ra \Hom_\calK (\rmH^*(BV), \rmH^*(X))  ,\;\;\;\;  f \mapsto f^*
$$
is a bijection
\end{thm}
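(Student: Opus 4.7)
The plan is to reduce the theorem to Lannes' main theorem on the cohomology of mapping spaces, and then extract the degree-zero part using the linearization theorem recalled in Section 1. The evaluation map $\mathrm{ev} \colon BV \times \map(BV, X) \ra X$ induces, via Künneth (applicable since $\rmH^*(BV)$ is of finite type), a morphism of unstable algebras $\rmH^*(X) \ra \rmH^*(BV) \otimes \rmH^*(\map(BV, X))$. By the adjunction defining $T_V$, this is the same datum as a canonical morphism
$$
\lambda \colon T_V(\rmH^*(X)) \lra \rmH^*(\map(BV, X)).
$$

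The main step, which carries the genuine content, is Lannes' mapping space theorem: under the present hypotheses, $\lambda$ is an isomorphism. The argument replaces $X$ by its Bousfield-Kan mod-$2$ completion $X_2^\wedge$, which is a $\rmH^*(-;\bF2)$-equivalence precisely because $\pi_1(X)$ is a finite $2$-group and $\rmH^*(X)$ is of finite type. One then runs the spectral sequence associated to the cosimplicial $\bF2$-resolution of $X_2^\wedge$ by products of Eilenberg-Mac Lane spaces. The exactness of $T_V$ — the key formal property it enjoys — collapses the higher derived functor terms onto $T_V$ itself, and the finite-type hypothesis guarantees strong convergence. This is the only delicate ingredient of the proof.

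Granted that $\lambda$ is an isomorphism, the theorem follows by a formal extraction of degree zero. On the topological side,
$$
\rmH^0(\map(BV, X); \bF2) \cong \bF2^{\pi_0 \map(BV, X)} = \bF2^{[BV, X]},
$$
the Boolean algebra of locally constant $\bF2$-valued functions on components. On the algebraic side, the linearization theorem recalled in Section 1 identifies
$$
T_V(\rmH^*(X))^0 \cong \bF2^{\Hom_\calK(\rmH^*(X), \rmH^*(V))}.
$$
Applying $\lambda^0$ and taking primitive idempotents of the resulting isomorphism of Boolean algebras produces a bijection between $[BV, X]$ and $\Hom_\calK(\rmH^*(X), \rmH^*(V))$. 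Naturality of evaluation then shows that the class of $f \colon BV \ra X$ is sent to the characteristic idempotent of $f^*$: restricting $\lambda^0$ to the component of $f$ amounts, by construction of $\mathrm{ev}$, to pulling back along $f$. The bijection therefore coincides with $f \mapsto f^*$.

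The only real obstacle is step two, Lannes' mapping space theorem itself, which relies on the convergence of the cosimplicial spectral sequence for $\map(BV, X_2^\wedge)$ and on the exactness of $T_V$; once this is in hand, steps one and three are purely formal consequences of the $T$-adjunction and of the linearization theorem, both of which are already in place in Section 1.
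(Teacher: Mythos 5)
The paper gives no proof of this statement: it is imported from Lannes \cite{LaT} as a black box, so there is no argument in the paper to compare yours against (note also that the target of the map should read $\Hom_\calK(\rmH^*(X),\rmH^*(BV))$ for $f\mapsto f^*$ to make sense; you silently and correctly fix this). Your sketch is the standard derivation and its architecture is right: the adjunction map $\lambda\colon T_V(\rmH^*(X))\ra \rmH^*(\map(BV,X))$ coming from evaluation, Lannes' mapping-space theorem as the one non-formal input, and extraction of degree $0$ via the linearization theorem together with the spectrum of the resulting isomorphism of Boolean algebras. Two points are glossed over. First, the mapping-space theorem proved by the cosimplicial resolution computes $\rmH^*(\map(BV,X_2^\wedge))$ and requires that $T_V(\rmH^*(X))$ be of finite type, which is not automatic from $\rmH^*(X)$ being of finite type and is part of Lannes' hypotheses. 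Second, and more substantially, a mod-$2$ cohomology equivalence $X\ra X_2^\wedge$ does not formally induce a bijection $[BV,X]\ra[BV,X_2^\wedge]$; an additional argument is needed to descend from $\pi_0\map(BV,X_2^\wedge)$ to $[BV,X]$, and this is where the hypothesis that $\pi_1(X)$ is a finite $2$-group does real work beyond guaranteeing that $X$ is mod-$2$ good. These are gaps in a sketch of a quoted theorem rather than errors; for the purposes of this paper the statement is simply cited, and your outline is consistent with the proof in \cite{LaT}.
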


The second theorem below is outside of the main stream of the note. It is there to show that this type of strategy also allows  to get informations on the ideal $Nil \subset \rmH^*(E)$. This will be done done in future articles.

\begin{thm} \label{epiLS} \cite{LS1}
The natural map
$$
  [\Sigma BV,X] \ra \Hom_\calK (\rmH^*(X), \rmH^*(\Sigma BV)), \;\;\;\; f \mapsto f^*
$$
is a surjection
\end{thm}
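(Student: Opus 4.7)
The plan is to reduce the statement to Lannes' theorem applied to $\Omega X$, after identifying the target Hom-set in a more manageable form using the trivial product structure of $\rmH^*(\Sigma BV)$.

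First, since $\rmH^*(\Sigma BV)\cong\bF2\oplus\Sigma\tilde{\rmH}^*(BV)$ has only trivial cup products in positive degree, any $\calK$-algebra map $\phi:\rmH^*(X)\to\rmH^*(\Sigma BV)$ must kill every decomposable in $\tilde{\rmH}^*(X)$, and so is equivalent to a $\calU$-map $\bar\phi:QH^*(X)\to\Sigma\tilde{\rmH}^*(BV)$. The identity $\Sq^{|x|}x=x^2$ forces $\Sq_0$ to vanish on $QH^*(X)$, which is therefore a suspension in $\calU$, and the suspension adjunction then yields
$$
\Hom_\calK(\rmH^*(X),\rmH^*(\Sigma BV))\;\cong\;\Hom_\calU\!\left(\Sigma^{-1}QH^*(X),\tilde{\rmH}^*(BV)\right).
$$
For $X=K_q$ one has explicitly $\Sigma^{-1}QH^*(K_q)\cong F(q-1)$.

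On the topological side, loop-suspension adjunction gives $[\Sigma BV,X]\cong[BV,\Omega X]$, and a map $f$ with adjoint $\tilde f$ satisfies $f^*=\Sigma\tilde f^*\circ\sigma$, where $\sigma:\tilde{\rmH}^*(X)\to\tilde{\rmH}^{*-1}(\Omega X)$ is the cohomology suspension. This $\sigma$ kills decomposables and takes values in the primitives, so it factors as a $\calU$-map $\tilde\sigma:\Sigma^{-1}QH^*(X)\to\tilde{\rmH}^*(\Omega X)$. Under the identification above, the natural map of the theorem sends $\tilde f$ to $\tilde f^*\circ\tilde\sigma$, and surjectivity amounts to realizing every $\psi\in\Hom_\calU(\Sigma^{-1}QH^*(X),\tilde{\rmH}^*(BV))$ as such a composite.

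To produce $\tilde f$, apply Lannes' theorem (the preceding result recalled in Section 3) to $\Omega X$: in the contexts relevant here---$X$ an Eilenberg-Mac Lane space or a bounded Postnikov tower over one---both hypotheses ($\rmH^*(\Omega X)$ of finite type, $\pi_1(\Omega X)=\pi_2(X)$ a finite $2$-group) hold, so $[BV,\Omega X]\cong\Hom_\calK(\rmH^*(\Omega X),\rmH^*(BV))$. Surjectivity therefore reduces to showing that every $\psi$ admits an extension along $\tilde\sigma$ to a $\calK$-algebra map $\Psi:\rmH^*(\Omega X)\to\rmH^*(BV)$. For $X=K_q$ one has $\Omega X=K_{q-1}$ so $\rmH^*(\Omega X)=UF(q-1)$, and $\tilde\sigma$ is the canonical inclusion $F(q-1)\hookrightarrow UF(q-1)$; the universal property of the Steenrod-Epstein functor $U$ then produces $\Psi$ uniquely and settles this case.

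The step I expect to be the main obstacle is the extension for a general $X$. I would proceed by induction along the Postnikov tower of $\Omega X$, using the injectivity of $\rmH^*(BV)$ in $\calU$ to obtain module-level lifts stage by stage and then promoting them to $\calK$-algebra maps compatibly with the $k$-invariants. Ensuring coherent choices at each stage with respect to the $\calA_2$-action is the technical heart, and is where the full $T_V$-functor calculus together with the equivalence $f:\calU/\nil\to\calF_\omega$ would be deployed, as in \cite{LS1}.
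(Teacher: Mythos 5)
First, a point of reference: the paper does not actually prove this statement --- it is imported wholesale from \cite{LS1} --- so the only question is whether your argument is complete and correct on its own. Your set-up is sound as far as it goes: the identification $\Hom_\calK(\rmH^*(X),\rmH^*(\Sigma BV))\cong\Hom_\calU(\Sigma^{-1}Q(\rmH^*(X)),\tilde\rmH^*(BV))$ is exactly the one the paper records right after the statement, the description of the map as $\tilde f\mapsto \tilde f^*\circ\tilde\sigma$ via the evaluation $\Sigma\Omega X\to X$ is correct, and the case $X=K_q$ is genuinely finished: $\Sigma^{-1}Q(\rmH^*(K_q))\cong F(q-1)$, $\rmH^*(\Omega K_q)\cong U(F(q-1))$, and the adjunction defining $U$ makes precomposition with $F(q-1)\hookrightarrow U(F(q-1))$ a bijection.

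The gap is that everything beyond $X=K_q$ is only a declared intention, and that is where the content of the theorem lies: the paper applies it not to $K_q$ but to the fiber $E$ of $\psi$ (Proposition 4.6), which is a genuine two-stage Postnikov tower. Two specific problems. (1) For general $X$ the cohomology suspension $\tilde\sigma:\Sigma^{-1}Q(\rmH^*(X))\to\tilde\rmH^*(\Omega X)$ is neither injective nor split, so "extending $\psi$ along $\tilde\sigma$" is not even the right formulation; what you must show is that every $\calU$-map out of $\Sigma^{-1}Q(\rmH^*(X))$ is of the form $\Psi\circ\tilde\sigma$ with $\Psi$ a $\calK$-map, and the proposed Postnikov induction never confronts the actual obstruction (the $k$-invariants, equivalently the failure of $\tilde\sigma$ to admit a section in $\calU$). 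Injectivity of $\rmH^*(BV)$ in $\calU$ gives module-level lifts against monomorphisms, which is not the situation here. (2) Your reduction also silently imposes the hypotheses of Lannes' theorem on $\Omega X$, i.e.\ that $\pi_1(\Omega X)=\pi_2(X)$ is a finite $2$-group and $\rmH^*(\Omega X)$ is of finite type; for the spaces in the paper's abstract (fibers over $K(\bZ,2)$, where $\Omega E\simeq S^1\times K_{p-2}$) this already fails for the version of Lannes' theorem quoted in Section 3. So the proposal is a plausible reduction plus a complete special case, not a proof of the cited theorem; the general statement still rests entirely on \cite{LS1}.
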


 For an augmented unstable algebra \(K\), let \(\tilde{K}\) be the ideal of augmentation. Denote by
	 \[
	 Q(K):=\frac{\tilde{K}}{\tilde{K}\cdot \tilde{K}}
	 \]
	 the unstable module of indecomposable elements. For any unstable algebra  \(K\),  \(Q(K)\)  is a suspension.   Remark that this is true without connectivity hypothesis on $K$ because $K^0$ is a Boolean algebra.
One has:
	 \[
	 \Hom_\calK (\rmH^*(X), \rmH^*(\Sigma BV)) \cong \Hom_\calU (\Sigma^{-1}Q(\rmH^*(X)), \tilde \rmH^*(BV))
	 \]

\medskip

Thanks to  4.2 and 4.3, sequences 4.1 and 4.2 yield the following exact sequences:
 \vspace{-.5cm}
	 \[
	 \begin{tikzcd}
	 \Hom_\calK( \rm\rmH^*(E),H^*(\Sigma BV))\arrow[d]& \Hom_\calK( \rm\rm H^*(E),H^*(BV))\arrow[d]\\
	 \Hom_\calK( \rm\rmH^*(X),H^*(\Sigma BV))\arrow[d]& \Hom_\calK( \rm\rmH^*(X),H^*(BV))\arrow[d]\\
	 \Hom_\calK( \rm\rmH^*(K_p),H^*(\Sigma BV))\arrow[ruu, controls={+(5,0) and +(-5,1.5)}]& \Hom_\calK( \rm\rmH^*(K_p),H^*(BV))
	 \end{tikzcd}
	 \]

Note that we have the following isomorphisms:
$$
[\Sigma BV,Y]_* \cong  [BV, \Omega X]_* \cong \Hom_\calK(\rmH^*(\Omega X), \rmH^*(BV))
$$

\bigskip

Now we use the hypothesis that the class $\psi$ is decomposable. The following is classical.

\begin{lem}
	If $\psi \in \rmH^p(X)$  is decomposable the class $\Omega \psi \in \rmH^{p-1}(\Omega X)$ is trivial.  Consequently  $\Omega E$ is homotopically equivalent to $\Omega X \times K_{p-2}$
\end{lem}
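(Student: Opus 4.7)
The plan is to split the statement into two parts: first show that $\Omega\psi$ is null-homotopic, and then use this to identify $\Omega E$.

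For the first part I would use the cohomology suspension $\sigma^{*}\colon \widetilde{\rmH}^{p}(X)\ra \widetilde{\rmH}^{p-1}(\Omega X)$ associated to the path–loop fibration $\Omega X \ra PX \ra X$. Two standard facts are needed. First, naturality: applying the construction to the map $\psi\colon X\ra K_p$ yields a commutative square with the analogous suspension for $K_p$, and since $\sigma^{*}(\iota_p)=\iota_{p-1}$, one identifies
\[
(\Omega\psi)^{*}(\iota_{p-1}) \;=\; \sigma^{*}(\psi) \;\in\; \rmH^{p-1}(\Omega X).
\]
Second, $\sigma^{*}$ vanishes on decomposables: the map factors through $\rmH^{p}(PX,\Omega X)$ via the contractibility of $PX$, and a product of two positive-degree classes in $\rmH^{*}(X)$ pulls back to a product of two relative classes on the contractible pair, hence to zero. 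Combining the two facts, if $\psi=\sum_i a_i b_i$ with $|a_i|,|b_i|>0$, then $\sigma^{*}(\psi)=0$, so $(\Omega\psi)^{*}(\iota_{p-1})=0$. Since $K_{p-1}$ is an Eilenberg–Mac Lane space, a map into it is null-homotopic iff it is zero on $\iota_{p-1}$; hence $\Omega\psi \simeq *$.

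For the second part I would loop the fibration $E \ra X \buildrel\psi\over\ra K_p$ to get a fibration
\[
\Omega E \lra \Omega X \buildrel{\Omega\psi}\over\lra \Omega K_p = K_{p-1}.
\]
A null-homotopic map $f\colon Y\ra Z$ between pointed connected spaces has homotopy fiber equivalent to $Y\times \Omega Z$ (chose a null-homotopy $H\colon Y\times I \ra Z$ with $H_0 = f$ and $H_1 = *$; the adjoint gives a section $Y\ra \Omega Z$ in the Puppe sequence, splitting the fibration). Applied to $\Omega\psi$, this yields
\[
\Omega E \;\simeq\; \Omega X \times \Omega K_{p-1} \;=\; \Omega X \times K_{p-2},
\]
which is the desired conclusion.

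The only delicate point is the identification $(\Omega\psi)^{*}(\iota_{p-1})=\sigma^{*}(\psi)$ and the vanishing of $\sigma^{*}$ on decomposables; both are classical (see e.g.\ Serre or Whitehead), so there is no real obstacle. Everything else is formal manipulation of the loop functor applied to a fibration together with the splitting of a fibration over a null-homotopic classifying map.
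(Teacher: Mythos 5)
Your proof is correct and follows essentially the same route as the paper: both arguments reduce to the classical fact that the cohomology suspension annihilates decomposables (the paper realizes $\sigma^*$ via the evaluation map $\Sigma\Omega X \to X$ and the vanishing of cup products in a suspension, while you use the path–loop fibration model — these are the same fact in two standard guises), and then split the looped fibration over the null-homotopic map $\Omega\psi$. Your explicit justification of the splitting $\Omega E \simeq \Omega X \times K_{p-2}$ fills in a step the paper only asserts.
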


\begin{proof}

One has a commutative diagram :
$$
\xymatrix{
	&		\Sigma \Omega X \ar[d]^{ev}	\ar[rr]^{\Sigma\Omega \psi}		&		    &	\Sigma \Omega K_p \cong \Sigma K_{p-1}	\ar[d]^{ev}\\
	&	X		\ar[rr]^{\psi}		&	&	K_p	&   &		\\
}
$$
Denote by $\iota_p$ the generator of $\rmH^p(K_p)$. The evaluation map $ev$ sends $(t,\omega)$, $t \in[0,1]$,
$\omega \in \Omega X$
(resp. $\Omega K_p$) to $\omega(t)$. As all nontrivial cup-products are trivial in a suspension
$\psi \circ ev \in \rmH^p(\Sigma\Omega X) $ is trivial. By commutativity of the diagram $ev \circ \Sigma \Omega \psi$  is trivial. As $eV^\#(\iota_p) = \Sigma \iota_{p-1}$ this implies that $\Sigma (\Omega \psi)^*(\iota_{p-1}) =0$,  and
$(\Omega\psi)^*(\iota_{p-1} )=0$

\end{proof}

As $\Omega \psi \sim *$ one gets an inclusion
$$
\Hom_\calK(\rmH^*(K_{p-1}), \rmH^*(BV))\hookrightarrow \Hom_\calK(\rmH^*(E), \rmH^*(BV))
$$

In other words, we have an injection \(g(K_{p-1}) \ra g(\rmH^*(E))\). Because of the equivalence of categories \(\calK/\nil\simeq \calG_{\omega}\), we get an \(F-\)epimorphism \(\rmH^*(E) \ra \rmH^*(K_{p-1})\), whence Theorem 1.2.  Thus,  $\rmH^*(E) \ra \rmH^*(K_{p-1})$ is an $F$-epimorphism:
  a certain power $\iota_{p-1}^{2^h}$ of the fundamental class is in the image of the morphism, however, to find a representative may be hard.


\bigskip
The following result is a consequence of Theorem  \ref{epiLS}, it is mentionned for future investigations:

\begin{prop} Let $E$ be the homotopy fiber of a non homotopically  trivial   decomposable map $\psi : X \ra K_p$.  There is a monomorphism of set valued functors:
$$f(\Sigma^{-1}Q(\rmH^*E))/\nil)  \hookrightarrow S^{p-2} \oplus g(\rmH ^*(\Omega X))
 \;\;\;\;\; .$$

\end{prop}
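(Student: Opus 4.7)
The plan is to dualize the surjection of Theorem \ref{epiLS} applied with $E$ in place of $X$, after using the loop space splitting of Lemma 4.4 to identify $[\Sigma BV,E]$ explicitly as the value of the target functor at $V$.

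First I would identify the target. Lemma 4.4 yields $\Omega E\simeq \Omega X\times K_{p-2}$, so by the suspension--loop adjunction together with Theorem 4.2 applied to each factor, and observing that $\Sigma BV$ is simply connected (so pointed and free homotopy classes coincide),
\[
[\Sigma BV,E]\;\cong\;[BV,\Omega X]\oplus[BV,K_{p-2}]\;\cong\;g(\rmH^*\Omega X)(V)\oplus S^{p-2}(V^\#),
\]
using $[BV,K_{p-2}]=\rmH^{p-2}(BV)=S^{p-2}(V^\#)$. Hence $[\Sigma BV,E]$ coincides with the value of $S^{p-2}\oplus g(\rmH^*\Omega X)$ at $V$.

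Next I would identify the source. Theorem \ref{epiLS} gives
\[
[\Sigma BV,E]\twoheadrightarrow \Hom_\calK(\rmH^*E,\rmH^*\Sigma BV)\cong \Hom_\calU\bigl(\Sigma^{-1}Q\rmH^*E,\tilde\rmH^*BV\bigr),
\]
the last isomorphism being the one displayed just before the diagram in the excerpt. Since $\rmH^*BV$ is reduced, every $\calU$-map into it annihilates the nil-submodule of its source, so the right-hand side equals $\Hom_\calU\bigl((\Sigma^{-1}Q\rmH^*E)/\nil,\,\rmH^*BV\bigr)$. Dualizing over $\bF2$ and invoking the definition $f(M)(V)=\Hom_\calU(M,\rmH^*V)^\#$ turns this surjection into the desired injection
\[
f\bigl((\Sigma^{-1}Q\rmH^*E)/\nil\bigr)(V)\hookrightarrow [\Sigma BV,E]^\#\cong\bigl[S^{p-2}\oplus g(\rmH^*\Omega X)\bigr](V).
\]

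The hard part will be the last identification of $[\Sigma BV,E]^\#$ with the target functor itself (rather than its dual) at the level of set-valued functors. This should follow from Lannes' linearization $f(K)(V)\cong \bF2^{g(K)(V)}$ applied to $K=\rmH^*\Omega E$, together with Pontryagin duality for finite $2$-torsion abelian groups, which makes each factor $S^{p-2}(V^\#)$ and $g(\rmH^*\Omega X)(V)$ self-dual as finite pointed sets. Naturality in $V$ is inherited from every step in the construction, yielding the desired monomorphism of set-valued functors.
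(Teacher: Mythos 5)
Your argument is the paper's own, step for step: identify $[\Sigma BV,E]\cong[BV,\Omega E]$ with $S^{p-2}(V^\#)\times g(\rmH^*(\Omega X))(V)$ via Lemma 4.4 and Lannes' theorem, surject it onto $\Hom_\calK(\rmH^*(E),\rmH^*(\Sigma BV))\cong\Hom_\calU(\Sigma^{-1}Q(\rmH^*(E)),\tilde\rmH^*(BV))$ by Theorem 4.3 (your remark that this factors through the quotient by $\nil$ because $\rmH^*(BV)$ is reduced is exactly the step the paper leaves implicit), and then dualize --- the paper compresses that last step into ``the inclusion follows.'' The one thing to correct is your closing appeal to ``Pontryagin duality making the factors self-dual as finite pointed sets'': $g(\rmH^*(\Omega X))(V)$ is a bare finite set with no group structure, and a finite set $S$ is never in bijection with $\bF2^{S}$, so that identification cannot be literal. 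What the dualized surjection actually produces is a monomorphism $f\bigl((\Sigma^{-1}Q(\rmH^*E))/\nil\bigr)(V)\hookrightarrow \bF2^{[\Sigma BV,E]}\cong f(\rmH^*(\Omega E))(V)$ by Lannes' linearization theorem, and the stated target $S^{p-2}\oplus g(\rmH^*(\Omega X))$ has to be read through that identification (i.e. as $g(\rmH^*(\Omega E))$ transported across the equivalence $\calK/\nil\simeq\calG_\omega$ and linearized), which is the same level of precision at which the paper itself leaves the matter.
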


It follows from  \ref{epiLS} that one has a surjection:
$$
 [\Sigma BV,E] \twoheadrightarrow \Hom_\calK (\rmH^*(E), \rmH^*(\Sigma BV)) \cong \Hom_\calU(\Sigma^{-1} Q(\rmH^*(E), \tilde \rmH^*V)  \;\;, $$
as  $$  [\Sigma BV,E]\cong [BV, \Omega E]\cong \Hom_\calK(\rmH ^*(\Omega E), H^*V)
 $$
and as we have isomorphisms
	 	\[
	 	\rmH ^*(\Omega E) \cong \rmH ^*(\Omega X)) \otimes \rmH^*(K_{p-2})  \;\;\;\;\;,
	 	\]
	 	the inclusion follows.



\section{The Eilenberg-Moore spectral sequence}

We are now going to prove  Proposition \ref{conjexe}. The methods are very much likely to adapt to prove Conjecture \ref{conjw}.
We will use the Eilenberg-Moore spectral sequence.
Recall various facts about the spectral sequence, references are \cite{Rec}, \cite{Sm2} and \cite{Dw}.
Let $F \ra E \ra B$ be a fiber sequence of pointed spaces,  assume that $\pi_1(B)$ is a finite $2$-group. The spectral sequence
has the following properties :
\begin{enumerate}
	\item  There is a deceasing filtration on $\rmH^* (F)$: $$\ldots \supset F_{-s}\supset \ldots \supset F_{-1} \supset F_0$$ by $\calA_2$-modules;
	\item  The $E^2_{s,*}$  page is isomorphic to $\Tor^{-s}_{\rmH^*(B)}(H^*(E),\bF2)$, $s \geq 0$;
\item  the differentials are of bidegree $(r,1-r)$ and are $\calA_2$-linear;
\item the spectral sequence is a multiplicative one, the product being given by the shuffle product;
\item  it converges towards $\rmH^*(F)$: the $k$-th term of the graded object associated to the filtration of $\rmH^k(F)$ is
 $\Sigma^{s}E^\infty_{s,-s+k}$;
\item $\Tor^{-s}_{\rmH^*(B)}(H^*(E),\bF2)$ is $(s-1)$-connected and   in $\nil_{s}$ (\cite{Sc2} Theorem 6.4.1).
\end{enumerate}

\medskip
About the convergence, it is obvious when  $B$ is $1$-connected, there is a vanishing line of slope $2$ : everything  below the line $2u+v=0$ is trivial and the filtration
of $\rmH^*(F)$ in a given degree is finite. For the general case see \cite{Dw}.

About (vi), $\Tor^{-s}_{\rmH^*(B)}(H^*(E),\bF2)$ is not in general an $s$-suspension, even if it is true for $s=1$ and $p=2$ (this is not true if $p>2$).  Being in \(\nil_{-s}\) follows from the fact that if \(M\) is a module over an unstable module \(K\), such that  the action satisfies the Cartan formula, then \(\Tor^{s}_{K}(M,\bF2)\) is \((-s-1)\)-connected (see \cite{Sc2} Lemma 6.4.3).

It follows from (ii) and (vi) that \(d_r(E^2_{s,*}) \in \nil_{-s-r+1}\).

Applications of these results are given  in \cite{Sc1}, see for example Theorem 8.7.1 and Proposition 8.7.7.

\medskip

It is worth and it will be useful to give some more informations on the construction of the spectral sequence. It follows from \cite{Rec} (see Section 3, the construction of the space $X^1$) that the $-1$ step of the
filtration on $\rmH^*(F)$ is, up to suspension, the image of $\rmH^*(X^n)$, is defined by the diagram of cofibrations below:
$$
\xymatrix{
	&F	\ar[r]  \ar[d]    &  \bullet  \ar[r]    \ar[d]         & \Sigma F \ar[r]  \ar[d]
	& \Sigma F  \ar[d] \\
	&E/\emptyset \ar[r]^{diag}&	E/\emptyset \wedge B \ar[r] &	X^1       \ar[r]   & \Sigma E/\emptyset	\\
}
$$

In our case $E=K_2$ and $B=K_p$.

This works as well when replacing all spaces $Z$ in this diagram by the corresponding mapping spaces $\map(BV,Z)$ (\cite{FS}).

\bigskip

In order to prove  \ref{conjexe}  one computes the associated graded object of $f(\sqrt {\rmH^*(F)})$ with respect  to the Eilenberg-Moore filtration.  Part (a) of the proposition follows directly. For part (b) one needs to control extensions.\begin{prop}
The Eilenberg-Moore filtration on $\rmH^*(F)$ induces a filtration on the functor $f(\rmH^*F) \cong f (\sqrt{\rmH^*F})$. The graded functor associated
to this filtration is a (graded) subquotient of the graded functor:
  $$
						 \left\{ T _{ V } \left( \Tor_{ \mathrm{H}^{ * }\left( B \right) }^{ - s }\left( \mathrm{H}^{ * }\left( E \right) , \mathbb{F} _{ 2 } \right) \right) ^{ s } \right\} _{ s \geq 0 }  \; \; \; \; \;.
						$$

(b) In our cases the spectral sequence degenerates at $E^2$. Thus  the associated  graded functor is equal to the preceding one and isomorphic to:

$$
 \left\{(\bF2^{\Gamma^2(V^\#)} \otimes_{{\bF2^{\Gamma^{p}(V^\#)} }} \bF2) \otimes \Lambda^s(\Gamma^{p-1})\right\}_{s \geq 0} \;\;\;\; .
$$
This isomorphism respect the multiplicative structure.
In terms of unstable modules this corresponds to:
$$
\sqrt {\rmH^*(X)/(\psi^*(\iota_p)}) \otimes \Lambda^*(F(p-1))
$$

\end{prop}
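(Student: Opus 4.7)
The plan is to push the Eilenberg--Moore spectral sequence through Lannes' functor $T_V$, exploiting its exactness and the explicit decomposition (1.2) of $T_V$ on $\rmH^*(K_p)$. For part (a), the Eilenberg--Moore filtration $\{F_{-s}\}_{s\geq 0}$ of $\rmH^*(F)$ consists of $\calA_2$-submodules, so applying the exact functor $T_V$ yields a filtration of $T_V\rmH^*(F)$, and restriction to degree zero gives a filtration of $f(\rmH^*F)(V) = T_V(\rmH^*F)^0$; since $f$ factors through $\calU/\nil$, this filtration lives equally on $f(\sqrt{\rmH^*F})$. Via the convergence statement that the $s$-th graded factor of $\rmH^k(F)$ is $\Sigma^s E^\infty_{s,-s+k}$, the $s$-th associated graded in the induced filtration on $f$ is a subquotient of $T_V(\Tor^{-s}_{\rmH^*K_p}(\rmH^*K_2,\bF2))^s$, using that $E^\infty$ is a subquotient of $E^2$ and $T_V$ preserves subquotients.

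For part (b), I would first compute the right-hand side explicitly. Exactness of $T_V$ together with its commutation with tensor products implies it commutes with $\Tor$, giving
$$T_V\Tor^{-s}_{\rmH^*K_p}(\rmH^*K_2,\bF2) \cong \Tor^{-s}_{T_V\rmH^*K_p}(T_V\rmH^*K_2,\bF2).$$
By formula (1.2), $T_V\rmH^*K_p \cong \bigotimes_{0\leq i\leq p}\rmH^*(K(\Gamma^iV^\#,p-i))$: the $i=p$ factor is the Boolean algebra $\bF2^{\Gamma^pV^\#}$ concentrated in degree $0$, the $i=p-1$ factor is a polynomial algebra generated in degree $1$, and the remaining factors live in degree $\geq 2$. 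Resolving $\bF2$ by a Koszul complex over $T_V\rmH^*K_p$ and tensoring with $T_V\rmH^*K_2$ then identifies the degree-$s$ piece of the $(-s)$-th $\Tor$ with
$$\bigl(\bF2^{\Gamma^2V^\#}\otimes_{\bF2^{\Gamma^pV^\#}}\bF2\bigr)\otimes\Lambda^s(\Gamma^{p-1}V^\#),$$
the Boolean base-change coming from the degree-$0$ part and the exterior factor from the degree-$1$ part. Multiplicativity follows from the shuffle product on the spectral sequence and the DGA structure of the Koszul resolution. Under the equivalences $\calF_\omega\simeq\calU/\nil$ and $\calG_\omega\simeq\calK/\nil$, the Boolean factor corresponds to $\sqrt{\rmH^*(X)/(\psi^*(\iota_p))}$ and the exterior factor to $\Lambda^*(F(p-1))$, recovering the stated unstable-algebra form.

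The hard part will be the degeneration of the spectral sequence at $E^2$ in the cases under consideration. The estimate $d_r(E^2_{s,*})\in\nil_{-s-r+1}$ shows that higher differentials land in modules of strictly higher nilpotence degree, which is strong evidence but not a proof of vanishing modulo $\nil$. I would nail this down by comparing, in each cohomological degree, the size of the $E^2$-page with $f(\sqrt{\rmH^*F})$ as constrained by Theorems \ref{mono} and \ref{epi}: matching dimensions forces collapse at the level of $f$. The residual extension problem in lifting the graded isomorphism to one of unstable algebras is expected to be mild since $\sqrt{\rmH^*F}$ is reduced, and should be resolvable using multiplicativity together with the explicit form of the two tensor factors.
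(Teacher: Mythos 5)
There is a genuine gap at the heart of your part (b). Your ``Koszul complex'' step simply asserts that the degree-$s$ piece of $\Tor^{-s}_{T_V\rmH^*(K_p)}(T_V\rmH^*(K_2),\bF2)$ is $\bigl(\bF2^{\Gamma^2(V^\#)}\otimes_{\bF2^{\Gamma^p(V^\#)}}\bF2\bigr)\otimes\Lambda^s(\Gamma^{p-1})$, ``the exterior factor coming from the degree-$1$ part.'' But that identification depends on the module structure: $T_V\rmH^*(K_p)$ has, besides its Boolean degree-$0$ part, a polynomial factor generated in degree $1$ by $\Gamma^{p-1}(V^\#)$, and you only get the full exterior algebra $\Lambda^s(\Gamma^{p-1})$ in $\Tor$ if these degree-$1$ generators act trivially (in the relevant degree) on $T_V\rmH^*(K_2)$. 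Nothing in your argument establishes this, and it is false for a general $\psi$ (e.g.\ an indecomposable stable class). This is precisely where the hypothesis that $\psi$ is decomposable must enter, and it is the main technical content of the paper's proof: one shows that $T_V(\psi)$ splits as $T_V(\psi)^0\otimes T_V(\psi)^{>0}$ and that for $\psi=\alpha\beta$ the component $T_V(\psi)^1:\Gamma^{p-1}(V^\#)\otimes F(1)^1\ra V^\#\otimes F(1)^1$ factors through nontrivial tensor products $\Gamma^u\otimes\Gamma^{v-1}$; since there are no nontrivial morphisms in $\calF$ from a nontrivial tensor product to $\Gamma^1$ (Franjou--Lannes--Schwartz), the relevant action vanishes and the bar/Koszul differential is trivial in the degree computing the diagonal. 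Your proposal never invokes decomposability at this point, so the computation of the $E^2$-term is unjustified.

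A secondary issue: for the degeneration you dismiss the estimate $d_r(E^2_{s,*})\in\nil_{-s-r+1}$ as ``evidence but not a proof'' and propose instead a dimension count against Theorems \ref{mono} and \ref{epi}. In fact the nilpotency/connectivity estimate \emph{is} the proof for the part of the spectral sequence seen by $f$: a differential into the diagonal originates in $T_V(\Tor^{-s-r})^{s+r-1}$, which vanishes because $\Tor^{-s-r}\in\nil_{s+r}$ is $(s+r-1)$-connected, and a differential out of the diagonal lands, by $\calA_2$-linearity, inside the largest submodule of the target lying in $\nil_{s}$, which is trivial in degree $s-r+1$ for $r\geq 2$. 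By contrast, your dimension count is not obviously available: Theorems \ref{mono} and \ref{epi} give a subalgebra and a quotient of $\sqrt{\rmH^*(E)}$ separately, and without the multiplicative splitting you are trying to prove they do not bound $f(\sqrt{\rmH^*(E)})$ from below by the product of the two factors. So that replacement both weakens the argument and discards the step that actually works.
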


\bigskip

We come back to the  case   of a decomposable map $\psi : K_2 \ra K_p$.
One  needs  to compute: $
T_V(\Tor^{-s}_{\rmH^*(K_p)}(\rmH^*(K_2),  \bF2))^s
$.
Recall that:
$\Tor^{-s}_{\rmH^*(K_p)}(\rmH^*(K_2), \bF2)$ is  the $-s$-th cohomology of the reduced bar complex
\begin{eqnarray}
\ldots \ra \rmH^*(K_2)\otimes \tilde \rmH^*(K_p)^{\otimes s} \ra \rmH^*(K_2)\otimes \tilde \rmH^*(K_p)^{\otimes s-1} \ra \ldots \ra\rmH^*(K_2)
\end{eqnarray}

 As the functor \(T_V\) is exact and commutes with tensor products, applying it to the sequence 5.1 yields a complex whose \(-s\)-th cohomology is $$\Tor^{-s}_{T_V(\rmH^*(K_p))}(T_V(\rmH^*(K_2)),\bF2)$$(see\cite{Sc2}).


This complex
simplifies because $\rmH^*(K_p)$ is an unstable algebra (see the proof of 6.4.3 in \cite{Sc2}). Recall formulae (2.1) and (2.2):
$$
T_V(F(p)) \cong \bigoplus_{0 \leq i \leq p} \Gamma^i(V) \otimes F(p-i)
$$
and as $\rmH^*(K_p) \cong U(F(p))$ one gets:
$$
T_V(\rmH^*(K_p)) \cong  \bigotimes_{0 \leq i \leq p} \rmH^*(K(\Gamma^{i}(V^\#), p-i))
$$

The Boolean of maps from a  set $X$ to $\bF2$ is denoted by  $\bF2^{X}$,   $\overline{\bF2^{X} }$ denotes the ideal of maps taking the value $0$ at a chosen base point; in our case as the sets are vector spaces one chooses $0$ as base point.

The evaluation map at the base point
$\bF2^{X} \ra \bF2$ makes $\bF2$ a projective module over $\bF2^{X}$.
As $T_V(\rmH^*(K_p))^0 \cong\bF2^{\Gamma^p(V^\#)}$, it makes
$$L_p = \bigotimes_{1 \leq i \leq p} \rmH^*(K(p-i, \Gamma^{i}(V^\#))$$ a projective module over
$$T_V(\rmH^*(K_p)) \cong  \bigotimes_{0 \leq i \leq p} \rmH^*(K(\Gamma^{i}(V^\#), p-i) \;\;\;\; .$$  Thus:

 \begin{lem}The reduced bar resolution of $\bF2$ over $L_p$ is also a projective resolution
 of $\bF2$ over $T_V(\rmH^*(K_p))$.
  \end{lem}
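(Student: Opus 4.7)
The setup is already in place in the paragraph preceding the lemma: $T_V(\rmH^*(K_p))$ splits as $A \otimes L_p$ with $A := \bF2^{\Gamma^p(V^\#)}$ concentrated in degree zero, and the Boolean algebra $A$ makes $\bF2$ a projective module via evaluation at $0 \in \Gamma^p(V^\#)$, which in turn renders $L_p$ a projective module over $T_V(\rmH^*(K_p))$ (once $A$ is made to act through its augmentation $\epsilon_A$). My plan is to promote this projectivity from $L_p$ to every term of the reduced bar complex, and then to verify that the complex still resolves $\bF2$ over the enlarged algebra.

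First, I would extend the projectivity assertion from $L_p$ to every $L_p$-free module. For an $\bF2$-vector space $W$, the free $A$-module with basis $W$ is, tautologically, projective over $A$; base change along the $A$-free (hence flat) inclusion $A \hookrightarrow A \otimes L_p$ then yields that $L_p \otimes W$, recovered as the tensor product of this free $A$-module with $A \otimes L_p$ over $A$, is projective over $T_V(\rmH^*(K_p))$. In particular each term $L_p \otimes \tilde L_p^{\otimes s}$ of the reduced bar resolution is a projective $T_V(\rmH^*(K_p))$-module when $A$ acts through $\epsilon_A$.

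Second, I would check that the bar complex, viewed with this $T_V(\rmH^*(K_p))$-action, really resolves $\bF2$. The differentials and the augmentation are $L_p$-linear by construction, and—since by design $A$ acts through $\epsilon_A$ on every term—they are also $T_V(\rmH^*(K_p))$-linear. Exactness is a purely $\bF2$-linear property of the underlying complex of vector spaces, hence it persists. The one point to identify is that the cokernel $\bF2$ is indeed the canonical augmentation module of $T_V(\rmH^*(K_p))$, which is immediate from the factorisation $\epsilon_{T_V(\rmH^*(K_p))} = \epsilon_A \otimes \epsilon_{L_p}$ of the augmentation.

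The only delicate step—the main obstacle—is the compatibility of the module structure introduced here with the one needed downstream: namely, the structure used to identify the complex with the $T_V$-image of the original bar resolution $(5.1)$, so that its cohomology indeed computes $\Tor^{-s}_{T_V(\rmH^*(K_p))}(T_V(\rmH^*(K_2)),\bF2)$. This compatibility is a consequence of $A$ sitting entirely in degree zero and of the exactness and tensor-product compatibility of $T_V$. Once it is spelled out, the lemma is a formal corollary of the slogan that base change along a flat morphism carries projectives to projectives, applied to the augmentation $A \to \bF2$.
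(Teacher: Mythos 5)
Your proof is correct and follows essentially the same route as the paper, whose entire argument is the sentence preceding the lemma: $\bF2$ is a ring direct factor of the Boolean algebra $\bF2^{\Gamma^p(V^\#)}$ (via the idempotent supported at $0$), hence projective over it, hence $L_p$ and therefore every free $L_p$-module is projective over $T_V(\rmH^*(K_p)) \cong \bF2^{\Gamma^p(V^\#)}\otimes L_p$, while exactness and the identification of the augmentation are statements about the underlying complex of vector spaces. One minor slip worth fixing: the $A$-module whose base change along $A\hookrightarrow A\otimes L_p$ is $L_p\otimes W$ is not the free module $A\otimes W$ (that would give $A\otimes L_p\otimes W$) but $W$ with $A$ acting through $\epsilon_A$ --- a projective, not free, $A$-module, being a direct sum of copies of the summand $\bF2$ --- and no flatness is needed anywhere, since base change along an arbitrary ring map preserves projectives.
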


  Define the unstable algebra $L_2$ by $L_2 =T_V(\rmH^*(K_2))\otimes_{\bF2^{\Gamma^p(V^\#)}} \bF2$.  It follows from the preceding lemma that
$T_V(\Tor^{-s}_{\rmH^*(K_p)}(\rmH^*(K_2), \bF2))$ is also   the $-s$-th homology of the reduced bar complex:
\begin{equation}\label{redbar}
       \cdots \ra L_2\otimes \bar L_p^{\otimes s} \ra L_2\otimes \bar L_p^{\otimes s-1} \ra \cdots \ra L_2 \ra 0
     \end{equation}

 In degree $0$ this is :

$$
\ldots \ra 0 \ra L^0_2 \cong \bF2^{\Gamma^2(V^\#)} \otimes_{{\bF2^{\Gamma^{p}(V^\#)} }} \bF2 \ra 0
$$
One has

	$$
	 L_2 \cong 	L_2^0 \otimes \rmH^*(K_2) \otimes \rmH^*(K(V^\#,1)) \;\;\;\; ,
	$$
 thus
     \[
       \begin{array}{cll}
       L^1_2 &\cong& V^\sharp \otimes L^0_2,\\
       L^1_p &\cong& \Gamma^{p-1}(V^\#),\\
       L^2_p &\cong& S^2(\Gamma^{p-1}(V^\#)),
       \end{array}
     \]
     in degree $s$, place $-s$, Complex \eqref{redbar} becomes:
     \[
     \begin{tikzcd}
     0\arrow[d]\\
     L_2^0 \otimes \Gamma^{p-1}(V^\sharp)^{\otimes s}\ar[d]\\
     V^\sharp \otimes L_2^0  \otimes \Gamma^{p-1}(V^\sharp)^{\otimes s-1}
     \bigoplus
     L_2^0 \otimes
     \left(\bigoplus_{i} \Gamma^{p-1}(V^\sharp)^{\otimes i} \otimes S^2(\Gamma^{p-1}(V^\sharp)) \otimes \Gamma^{p-1}(V^\sharp)^{\otimes s-3-i}\right)
     \end{tikzcd}
     \]

\medskip
We will show that the morphism is trivial for $s=1$, the map is the one of the  module structure.
 We will show below that the composite:
 $$
 L_{L_2} \otimes \Gamma^{p-1}(V^\#) \hra  L^0_2 \otimes L^1_p \otimes V^\# \ra  L^0_2  \otimes V^\#
 $$
  takes values into $L_{L^0_2}\otimes V^\#$. We will show also it factors through nontrivial the tensor products. As there are no nontrivial morphism from a nontrivial tensor product to $\Gamma^1$ (see in \cite{FLS},  " Alg\`ebre de Steenrod, modules instables et foncteurs polynomiaux")  the morphism will be shown to be trivial because of the module  structure.

  It follows, using the multiplicative structure on the bar resolution, that as an algebra, we have a natural isomorphism in $V$ :
  $$
  	 \left\{T_V(\Tor^{-s}_{\rmH^*(K_p)}(\rmH^*(K_2), \bF2))^s \right\}_{s \geq 0}
 \cong  \left\{L^0_2 \otimes \Lambda^s(\Gamma^{p-1}) \right\}_{s \geq 0} \;\;\;\; .
  	$$

This identifies with the $E^\infty$-term on the diagonal $p+q=0$ and therefore is the associated graded of $f(\sqrt{\rmH^*(E))}$,  \ref{conjexe} (a) follows.  Let us make this a bit more precise: applying the functor $T_V$
to the Eilenberg-Moore spectral sequence yields the Eilenberg-Moore spectral sequence for the mapping spaces as observed in \cite{FS}.

 \bigskip

In order to complete the  proof one needs to
 describe how to  compute  the induced morphism $T_V(\psi)$ and gets some  informations about the module structure of $T_V(\rmH^*(K_2))$ over $T_V(\rmH^*(K_p))$. We will make this a bit longer than necessary for future use.

We will  show  two facts:
\begin{itemize}
\item the first one is that  $T_V(\psi)$ writes  as $T_V(\psi)^0 \otimes T_V(\psi)^{>0}$,
\item the second
 one is that $T_V(\psi)^{1} : \Gamma^{p-1} (V^\#) \otimes F(1)^1 \ra  V^\# \otimes F(1)^1 $ under the hypothesis $\psi$ is decomposable, factors through (a direct sum of) nontrivial tensor products $A \otimes B \otimes F(1)^1$. As there are no nontrivial morphisms (\cite{FLS}) from a nontrivial tensor product $A \otimes B$ to the identity functor we are done.
 \end{itemize}

  In the discussion below, we check the first affirmation  in all cases: stable classes, sums of classes, products of classes. This is done by inspection.  For the second fact one just needs to look at the argument about product of classes.

 \bigskip

In degree $0$ the morphism is induced by composition of maps: $\psi$ induces a  set map, natural in $V$, $S^2(V^\#) \ra S^p(V^\#)$. To a (set) map $ a : S^p(V^\#) \ra \bF2$ one associates
$a \circ \psi$.

\medskip

We discuss now the behavior in non zero degrees.

If $\psi$ is a stable class, it is induced by a morphism $F(p) \ra F(2)$  which we will denote also by $\psi$,
({\it i.e.} equivalently $\psi = \theta (\iota_2)$ where $\theta$ is a Steenrod operation). The induced morphism $\rmH^*(K_p) \ra \rmH^*(K_2)$
is obtained by applying the Steenrod-Epstein functor $U$ to $\psi$.

 The morphism $T_V(\psi )): T_V(\rmH^*(K_p) \ra T_V(\rmH^*(K_2)$ is obtained by applying $U$ to:
$$
T_V(F(p)) \cong \bigoplus_{0 \leq i \leq p} \Gamma^i(V) \otimes F(p-i)
\ra T_V(F(2)) \cong \bigoplus_{0 \leq j \leq 2} \Gamma^j(V) \otimes F(2-j)
$$

As there are no nontrivial morphisms to (resp. from)  $F(0)$ from (resp. to) $F(n)$), $n>0$, this morphism splits up as a direct sum
$T_V(\psi)^{0} \oplus T_V(\psi)^{>0}$.

Next, as there are no nontrivial morphism from $F(k)$ to $F(\ell)$
if $k< \ell$ the morphism $T_V(\psi)^{>0}$  has a triangular matrix in the following sense: the morphisms from  $\Gamma^i(V) \otimes F(p-i)$ to $\Gamma^j(V) \otimes F(2-j)$
can be nontrivial only $p-i \geq 2-j$.

\bigskip

The following steps are  to describe the situation on a sums and  products of classes.

Let $\psi \in \rmH^p (K_2)$, suppose that $\psi =\alpha+ \beta$,
$\alpha \in \rmH^p(K_2)$, $\beta \in \rmH^p(K_2)$. The morphism $$T_V(\psi): T_V(\rmH^*(K_p)) \ra T_V(\rmH^*(K_2))$$

 is the composite:
	$$
	T_V(\rmH^*(K_p)) \ra T_V(\rmH^*(K_p) \otimes T_V(\rmH^*(K_p))
	 \buildrel{T_V(\alpha \otimes T_V(\beta)}\over{\lra} \dots
	 	$$
	$$
	\ldots
	\buildrel{T_V(\alpha) \otimes T_V(\beta)}\over{\lra}T_V(\rmH^*(K_2) \otimes T_V(\rmH^*(K_2))
	\buildrel \mu \over \ra T_V(\rmH^*(K_2))
	$$
	where $T_V(\rmH^*(K_p)) \ra T_V(\rmH^*(K_p) \otimes T_V(\rmH^*(K_p))$ is obtained by applying $T_V \circ U$
	to $F(p) \buildrel diag \over \lra F(p) \oplus F(p)$.

\medskip

The key step is the one of a product  of classes.
Let $\psi \in \rmH^p (K_2)$, suppose that $\psi =\alpha\beta$,
$\alpha \in \rmH^u(K_2)$, $\beta \in \rmH^v(K_2)$, $u,v >0$. The morphism $$T_V(\psi): T_V(\rmH^*(K_p)) \ra T_V(\rmH^*(K_2))$$

 is the composite:
	$$
	T_V(\rmH^*(K_p)) \ra T_V(\rmH^*(K_u) \otimes T_V(\rmH^*(K_v))
	\buildrel{T_V(\alpha \otimes T_V(\beta)}\over{\lra}T_V(\rmH^*(K_2) \otimes T_V(\rmH^*(K_2))
	\buildrel \mu \over \ra \rmH^*(K_2)
	$$

where $T_V(\rmH^*(K_p)) \ra T_V(\rmH^*(K_u) \otimes T_V(\rmH^*(K_v))$ is obtained, using the universal property of $U$, from $F(p) \ra F(u) \otimes F(v) \hra U(F(u)) \otimes U(F(v))$.

\bigskip

Finally checking each step one gets:

\begin{prop} For any class $\psi$ the morphism $T_V(\psi)$ is the tensor product of $T_V(\psi)^0$ and of $T_V(\psi)^{>0}$

\end{prop}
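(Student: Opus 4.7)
The plan is to verify the tensor decomposition by case analysis on how the class $\psi \in \rmH^p(K_2)$ is assembled from elementary pieces: stable classes, sums, and products. Since $\rmH^*(K_2)$ is generated as an algebra by the Steenrod images $\theta(\iota_2)$ of the fundamental class, these three cases suffice by induction on the polynomial expression of $\psi$.

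For a stable class $\psi = \theta(\iota_2)$, corresponding to a morphism $F(p) \to F(2)$ in $\calU$, applying $T_V$ and formula (2.1) produces the map
$$\bigoplus_{0 \le i \le p} \Gamma^i(V) \otimes F(p-i) \;\longrightarrow\; \bigoplus_{0 \le j \le 2} \Gamma^j(V) \otimes F(2-j).$$
Because $\Hom_\calU(F(k), F(\ell)) = F(\ell)^k = 0$ for $k < \ell$, and because $F(0) = \bF2$ admits no nontrivial morphism to nor from $F(n)$ with $n > 0$, this morphism splits as a direct sum: the $i = p$ summand can only feed the $j = 2$ summand, and the $i < p$ summands can only feed the $j < 2$ summands. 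Applying the Steenrod-Epstein functor $U$, which is a left adjoint (hence sends direct sums in $\calU$ to tensor products in $\calK$) and commutes with $T_V$, turns this direct-sum splitting into the desired tensor factorization $T_V(\psi) = T_V(\psi)^0 \otimes T_V(\psi)^{>0}$.

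For a sum $\psi = \alpha + \beta$, the composite description in the excerpt realizes $T_V(\psi)$ using the coproduct on $T_V(\rmH^*(K_p))$ (induced by $F(p) \to F(p) \oplus F(p)$), the tensor product $T_V(\alpha) \otimes T_V(\beta)$, and the multiplication on $T_V(\rmH^*(K_2))$. Both the coproduct and the multiplication respect the splitting of each $T_V(\rmH^*(K_n))$ into its degree-zero and positive-degree factors, so by induction on $\alpha$ and $\beta$ the tensor decomposition transfers to $\psi$.

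The main case is a product $\psi = \alpha\beta$ with $|\alpha| = u$, $|\beta| = v$, $u, v > 0$. Here $T_V(\psi)$ is assembled through the map $F(p) \to F(u) \otimes F(v)$, followed by $T_V(\alpha) \otimes T_V(\beta)$, followed by the multiplication on $T_V(\rmH^*(K_2))$. A direct-sum analysis on $T_V(F(p)) \to T_V(F(u)) \otimes T_V(F(v))$ via formula (2.1) shows, by the same $\Hom_\calU$ vanishing and by degree considerations, that the $i = p$ component lands in the $(j=u, k=v)$ degree-zero component while the $i < p$ components land in the remaining positive-degree components. Combining this with the inductive tensor decompositions for $\alpha$ and $\beta$, and using compatibility of the multiplication with the splitting, yields the tensor factorization for $\psi$. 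The hardest part is the bookkeeping at this product step to propagate the inductive hypothesis; once it is in place, induction on the polynomial expression of $\psi$ in the generators of $\rmH^*(K_2)$ completes the proof.
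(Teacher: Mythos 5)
Your proposal is correct and follows essentially the same route as the paper: an induction over the way $\psi$ is built from stable classes, sums, and products, using the vanishing of $\Hom_\calU(F(k),F(\ell))$ for $k<\ell$ (and the absence of nontrivial maps to or from $F(0)$) to split the map of $T_V(F(p))$ into its $F(0)$ and positive parts, then applying $U$ to turn that direct-sum splitting into a tensor factorization. You supply slightly more detail than the paper's ``by inspection,'' but the decomposition and the key vanishing facts are the same.
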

\begin{cor}
	The morphism
$T_V(\psi)^1$ identifies is the tensor product of $T_V(\psi)^0$ and  $\Gamma^{p-1}(V^\#) \otimes F(1) \ra V^\# \otimes F(1)$.
\end{cor}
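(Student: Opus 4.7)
The plan is to recover the corollary as a direct bookkeeping consequence of the preceding proposition combined with the splitting (2.1). Once the factorization $T_V(\psi) \cong T_V(\psi)^0 \otimes T_V(\psi)^{>0}$ is in hand, the remaining task is simply to read off the $F(1)$-component of $T_V(\psi)^{>0}$ and identify it with the displayed morphism.

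First I would apply (2.1) to the source and target of $\psi : F(p) \to F(2)$. This gives
\[
T_V(F(p)) \cong \bigoplus_{0 \le i \le p} \Gamma^{i}(V^{\#}) \otimes F(p-i), \qquad T_V(F(2)) \cong \bigoplus_{0 \le j \le 2} \Gamma^{j}(V^{\#}) \otimes F(2-j).
\]
Picking out the $F(1)$-summand in each decomposition forces $i = p-1$ on the source (yielding $\Gamma^{p-1}(V^{\#}) \otimes F(1)$) and $j = 1$ on the target (yielding $\Gamma^{1}(V^{\#}) \otimes F(1) = V^{\#} \otimes F(1)$). These are precisely the two sides of the morphism displayed in the corollary.

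Second I would invoke the triangularity established in the proof of the proposition: since $\Hom_{\calU}(F(k), F(\ell)) = 0$ for $k < \ell$, the component of $T_V(\psi)^{>0}$ emanating from the $F(1)$-summand can land only in target summands $F(\ell)$ with $\ell \le 1$. The $F(0)$-contribution is absorbed into the degree-zero factor $T_V(\psi)^{0}$ by construction, so what remains is a single morphism $\Gamma^{p-1}(V^{\#}) \otimes F(1) \to V^{\#} \otimes F(1)$. Tensoring this piece back with $T_V(\psi)^{0}$ and applying the proposition yields the asserted identification of $T_V(\psi)^{1}$.

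There is no genuine mathematical obstacle here; the only point requiring care is matching the summand labelling between source and target and checking that the $F(0)$-contribution is indeed the factor already named $T_V(\psi)^{0}$. This explicit formula is exactly what the subsequent argument requires: combined with the fact recalled from \cite{FLS} that there is no nontrivial natural transformation from a nontrivial tensor product to the identity functor, it forces the relevant differential in the bar complex to vanish, feeding into the proof of Proposition \ref{conjexe}.
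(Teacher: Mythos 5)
Your argument is correct and is essentially the paper's: the corollary is read off from the preceding proposition by combining the splitting (2.1) of $T_V(F(p))$ and $T_V(F(2))$, the vanishing of $\Hom_\calU(F(k),F(\ell))$ for $k<\ell$ (and of morphisms between $F(0)$ and $F(n)$, $n>0$), and the tensor factorization $T_V(\psi)\cong T_V(\psi)^0\otimes T_V(\psi)^{>0}$. The only cosmetic difference is that the paper kills the $F(1)\to F(0)$ component outright via $\Hom_\calU(F(1),F(0))=0$ rather than describing it as absorbed into $T_V(\psi)^0$, but this does not affect the conclusion.
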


Consider now the induced morphism on $\Gamma^{p-1}(V^\#) \otimes F(1)^1 \ra V^\# \otimes F(1)^1$
By construction it factors as:

      \[
     \begin{tikzcd}
    \Gamma^{p-1}(V^\#) \otimes F(1)^1 \arrow[d]\\
    \Gamma^{u}(V^\#) \otimes F(1)^1  \otimes \Gamma^{v-1}(V^\#) \otimes F(1)^0
\oplus   \Gamma^{u}(V^\#) \otimes F(1)^0  \otimes \Gamma^{v-1}(V^\#) \otimes F(1)^1\ar[d]\\
 V^\# \otimes F(1)^1
     \end{tikzcd}
        \]
with $u+v=p$.
As $u,v>1$ in our cases the second technical result follows.

This proves part (a) of the conjecture.

\bigskip

In order to prove the strong form in  \ref{conjexe} it is enough to show that the $-1$ step of the
filtration of $f(\rmH^*(E))$, $F_{-1}$, is isomorphic to
$$\tilde  L^0_2 \otimes \Gamma^{p-1}(V^\#) \oplus \Gamma^{p-1}(V^\#)
\oplus L^0_2$$

Indeed, if so the direct summand $\Gamma^{p-1}(V^\#)$ generates multiplicatively an exterior algebra $\Lambda^*(\Gamma^{p-1}(V^\#))$, and thus the graded associated to $\sqrt {\rmH^*(E)}$ contains a subfactor  $\Lambda^*(F(p-1))$ generated on the $-1$-column  of the spectral sequence. The result follows.

Saying it differently it means that a certain power $\iota^{2^k}_{p-1}$ is not only in the image of $\rmH^*(E) \ra \rm \rmH^*(K_{p-1})$, but
that there is a lifting $x$ such that the map $$\frac {\calA_2. x}{Nil} \ra \calA_2 .\iota^{2^k}_{p-1} \subset F(p-1)$$ is an isomorphism.

\begin{exe} Such a lifting may be very hard to find. In the case of the class $\psi = \iota_2 Q_1(\iota_2)$, the class
$\Sq^4\iota_5 \otimes 1+ \Sq^2\iota_5 \otimes \iota_2 + \iota_5 \otimes \iota_2^2 $
 is a candidate.
\end{exe}

 One knows there is a short exact sequence:

$$
0 \ra  L^0_2 \ra F_{-1} \ra L^0_2 \otimes \Gamma^{p-1}(V^\#) \cong  \Gamma^{p-1}(V^\#) \oplus \overline{ L^0_2 }\otimes \Gamma^{p-1}(V^\#) \ra 0
$$
and one shows that it splits.

Because of the multiplicative structure it is enough to show that,
\begin{eqnarray}
\Ext^1_\calF( \Gamma^{p-1}, L^0_2)=0
\end{eqnarray}

As is shown below this is true in all  the cases we consider, however it is hopeless to expect such a condition to be true in full generality.  We will show that the corresponding extension group is non trivial if $p-1$ is odd
in the case of some fiber $E$ of a map from $K_3$ to $K_{p-1}$.

\medskip

There is an isomorphism (\cite{FFPS} th\'eor\`eme 12.13 page 95):
$$
\Ext^1_\calF( \Gamma^{p-1}, L^0_2) \cong (\ell^1 \circ m(L^0_2))^{p-1}
$$
 $m$ is the right  adjoint of $f$, $\ell = m \circ f$, $\ell= m \circ f$, $\ell$ is left exact and $\ell^1$ is the first right  derived functor.

The work of G. Powell \cite{Po2} shows all  the of  $L^0_2 $ in our list have a finite filtration
whose quotients are in the list below. Therefore it is enough to check (5.3)
for the functors:
\begin{itemize}
	\item the constant functor $\bF2$,
	\item $I :V \mapsto \bF2^{V^\#}$;
	\item $I_2 : V \mapsto (\bF2^{V^\#})^{\otimes 2}$;
	\item their direct summands;
	\item $B_2 :V \mapsto ((\bF2^{V^\#})^{\otimes 2})^{\bZ/2}$, note that $B_2 \cong S^2(I)$;
	\item the kernel  $\bar B_2$ of   the epimorphism $B_2 \ra I$;
	\item $D_2 :V \mapsto ((\bF2^{V^\#})^{\otimes 2})^{\GL_2(\bF2)}$;
	\item the kernel  $\bar D_2$ of   the epimorphism $D_2 \ra I$.

\end{itemize}

The first three ones are injective.

For the two following cases, one has  $m(B_2) \cong \bF2[w_1,w_2] \cong  \bF2[u,v]^{\bZ/2}$. The unstable module
$\bF2[w_1,w_2] $ has an injective resolution which starts  like:
$$
\bF2[w_1,w_2]\hra  \bF2[u,v] \buildrel{1+ \tau^* } \over \ra  \bF2[u,v] \buildrel{1+ \tau^* } \over \ra  \bF2[u,v] \oplus  \bF2[u] \ra \ldots
$$
$\tau$ is the transposition.
In order to prove this one uses the exact sequence of functors:
 $$
 0 \ra \Gamma^2 \ra (\Gamma^1)^{\otimes 2} \ra (\Gamma^1)^{\otimes 2} \ra S^2
 $$
 applied to $ \bF2[u] \cong\rmH^*(V_1)$, with
  $ \bF2[u,v] \cong\rmH^*(V_1)^{\otimes 2}$ and
$\bF2[w_1,w_2] \cong\Gamma^2(\rmH^*(V_1))	$. The morphism $(\Gamma^1)^{\otimes 2} \ra (\Gamma^1)^{\otimes 2} $
being the norm map.

 From the definition of $\ell^1$ it follows that $\ell^1(m(B_2))=0$ and the result follows for $B_2$. As $B_2$ is the direct
sum of $\bar B_2$ and $I$ result holds for $\bar B_2$.

\bigskip

This gives us the proof of Propositions \ref{case1} and \ref{case3}.

\bigskip

For the two last cases one needs to study $D_2$ and $\bar D_2$. It is done as follows, the  subfunctor
$$A_2 :V \mapsto ((\bF2^{V^\#})^{\otimes 2})^{\bZ/3} $$
is a direct summand in $\subset \bF2^{V^\#})^{\otimes 2} \cong \bF2^{\Hom(V, \bF2^{\oplus 2})})$. The action of ${\bZ/3}$ is {\textit{ via}} the inclusion into $\GL_2(\bF2)$.
It is known (\cite{Po1}) to be inserted in a short exact sequence:
$$
E \hra A_2 \era I
$$
where
$$
\bar D_2 \hra E \era \bar D_2
$$
and one has also
$$
D_2 \hra A_2 \era \bar D_2 \;\;\;\; \; .
$$
in terms of unstable modules one gets corresponding sequences applying the functor $m$,. As $m$ they may not be exact on the right, by inspection it is not in the first case and it is in the the two other cases. In the first case :
$$
m(E) \hra \bF2[u,v]^{\bZ/3} \ra \bF2[u]
$$
the image on the right being $ \bF2[u^2]$,
and
$$
\bar D(2) \hra m(E) \era \bar D(2) \;\;\;\; ,
$$
and
$$
D(2) \hra   \bF2[u,v]^{\bZ/3}  \era \bar D(2) \;\;\;\; ,
$$
$\bar D(2)$ being the kernel of  the unique nontrivial morphism $D(2) \ra \bF2[u]$.
It follows that:
$$
\ell^1(m(E)) \cong \bF2[u]/\bF2[u^2] \;\;\;\; ,
$$
and that:

$$
\ell^1(m(\bar D_2)) \cong \bF2[u]/\bF2[u^2]
$$

In our examples we are first concerned with $D(2)$. In this case we can exploit the short exact sequence:
 which, as $\bar D(2)$ is $\nil$-closed,  yields immediately  $ \ell^1\circ m( D(2))=0 $. This is \ref{case2}. There remains case \ref{case4}, keeping the notations one has a short exact sequence of unstable algebras:
 $$
 M_2 \hra D(2) \oplus H_2 \era \bF2[u^2]
 $$

As $\ell^1(D(2))= \ell^1(H_2)=0$  it impies that:
$$
\Ext^1_\calF( \Gamma^{p-1}, \bar D_2) \cong \ell^1  \circ m(\bar D(2))^{p-1}\cong  \bF2[u]/\bF2[u^2]
$$

and the strong form of the conjecture is true if $p-1$ is odd.

\begin{rmq} As $\bar D_2$ is not a quotient  of $\rmH^*(K_2)$, the bottom class in degree $3$, it does not appear (alone)
in our list. However it is a quotient of $\rmH^*(K_3)$, thus in such a case the proof of the strong form of the conjecture would not follow
because
$$
\Ext^1_\calF( \Gamma^{p-1}, \bar D_2) \cong \ell^1  \circ m(\bar D(2))^{p-1} \cong  \bF2[u]/\bF2[u^2]
$$
is trivial only if $p-1$ is odd.
\end{rmq}

\bibliographystyle{plain}
\bibliography{bibli}

\begin{thebibliography}{10}

\bibitem{AW}
John~F. Adams and Clarence~W. Wilkerson.
\newblock Finite {$H$}-spaces and algebras over the {S}teenrod algebra.
\newblock {\em Ann. of Math. (2)}, 111(1):95--143, 1980.

\bibitem{BZ}
Carlos Broto and Sa\"{\i}d Zarati.
\newblock On sub-{$\calA^*_p$}-algebras of {$H^*V$}.
\newblock In {\em Algebraic topology ({S}an {F}eliu de {G}u\'{\i}xols, 1990)},
  volume 1509 of {\em Lecture Notes in Math.}, pages 35--49. Springer, Berlin,
  1992.

\bibitem{FS}
Emmanuel Dror~Farjoun and J.effrey Smith.
\newblock A geometric interpretation of {L}annes' functor {${\bf T}$}.
\newblock Number 191, pages 6, 87--95. 1990.
\newblock International Conference on Homotopy Theory (Marseille-Luminy, 1988).

\bibitem{Dw}
W.~G. Dwyer.
\newblock Strong convergence of the {E}ilenberg-{M}oore spectral sequence.
\newblock {\em Topology}, 13:255--265, 1974.

\bibitem{FFPS}
Vincent Franjou, Eric~M. Friedlander, Teimuraz Pirashvili, and Lionel Schwartz.
\newblock {\em Rational representations, the {S}teenrod algebra and functor
  homology}, volume~16 of {\em Panoramas et Synth\`eses [Panoramas and
  Syntheses]}.
\newblock Soci\'{e}t\'{e} Math\'{e}matique de France, Paris, 2003.

\bibitem{FLS}
Vincent Franjou, Lannes Jean, and Lionel Schwartz.
\newblock Autour de la cohomologie de mac lane des corps finis.
\newblock {\em Inv. math.}, 115:513--538, 1994.

\bibitem{Gab}
Pierre Gabriel.
\newblock Des cat\'{e}gories ab\'{e}liennes.
\newblock {\em Bull. Soc. Math. France}, 90:323--448, 1962.

\bibitem{Hat}
Allen Hatcher.
\newblock {\em Algebraic topology}.
\newblock Cambridge University Press, Cambridge, 2002.

\bibitem{HLS}
Hans-Werner Henn, Jean Lannes, and Lionel Schwartz.
\newblock The categories of unstable modules and unstable algebras over the
  {S}teenrod algebra modulo nilpotent objects.
\newblock {\em Amer. J. Math.}, 115(5):1053--1106, 1993.

\bibitem{Kr1}
David Kraines.
\newblock The {${\calA}(p)$} cohomology of some {$k$} stage {P}ostnikov
  systems.
\newblock {\em Comment. Math. Helv.}, 48:56--71, 1973.

\bibitem{LaT}
Jean Lannes.
\newblock Sur les espaces fonctionnels dont la source est le classifiant d'un
  {$p$}-groupe ab\'{e}lien \'{e}l\'{e}mentaire.
\newblock {\em Inst. Hautes \'{E}tudes Sci. Publ. Math.}, (75):135--244, 1992.
\newblock With an appendix by Michel Zisman.

\bibitem{LS1}
Jean Lannes and Lionel Schwartz.
\newblock Sur la structure des {$A$}-modules instables injectifs.
\newblock {\em Topology}, 28(2):153--169, 1989.

\bibitem{LS2}
Jean Lannes and Lionel Schwartz.
\newblock Sur les groupes d'homotopie des espaces dont la cohomologie modulo
  {$2$} est nilpotente.
\newblock {\em Israel J. Math.}, 66(1-3):260--273, 1989.

\bibitem{LZens}
Jean Lannes and Sa\"{\i}d Zarati.
\newblock Sur les {${\calU}$}-injectifs.
\newblock {\em Ann. Sci. \'{E}cole Norm. Sup. (4)}, 19(2):303--333, 1986.

\bibitem{Mil}
John Milnor.
\newblock The {S}teenrod algebra and its dual.
\newblock {\em Ann. of Math. (2)}, 67:150--171, 1958.

\bibitem{Po2}
Geoffrey M.~L. Powell.
\newblock The structure of indecomposable injectives in generic representation
  theory.
\newblock {\em Trans. Amer. Math. Soc.}, 350(10):4167--4193, 1998.

\bibitem{Po1}
Geoffrey M.~L. Powell.
\newblock The structure of indecomposable injectives in generic representation
  theory.
\newblock {\em J.P.A.A.}, 128:291--310, 1998.

\bibitem{Rec}
David~L. Rector.
\newblock Steenrod operations in the {E}ilenberg-{M}oore spectral sequence.
\newblock {\em Comment. Math. Helv.}, 45:540--552, 1970.

\bibitem{Sc1}
Lionel Schwartz.
\newblock La filtration nilpotente de la categorie {${\mathcal U}$} et la
  cohomologie des espaces de lacets.
\newblock In {\em Algebraic topology---rational homotopy ({L}ouvain-la-{N}euve,
  1986)}, volume 1318 of {\em Lecture Notes in Math.}, pages 208--218.
  Springer, Berlin, 1988.

\bibitem{Sc2}
Lionel Schwartz.
\newblock {\em Unstable modules over the {S}teenrod algebra and {S}ullivan's
  fixed point set conjecture}.
\newblock Chicago Lectures in Mathematics. University of Chicago Press,
  Chicago, IL, 1994.

\bibitem{Se}
Jean-Pierre Serre.
\newblock Cohomologie modulo {$2$} des complexes d'{E}ilenberg-{M}ac{L}ane.
\newblock {\em Comment. Math. Helv.}, 27:198--232, 1953.

\bibitem{Sm1}
Larry Smith.
\newblock The cohomology of stable two stage {P}ostnikov systems.
\newblock {\em Illinois J. Math.}, 11:310--329, 1967.

\bibitem{Sm2}
Larry Smith.
\newblock {\em Lectures on the {E}ilenberg-{M}oore spectral sequence}.
\newblock Lecture Notes in Mathematics, Vol. 134. Springer-Verlag, Berlin-New
  York, 1970.

\bibitem{St}
Norman~E. Steenrod.
\newblock {\em Cohomology operations}.
\newblock Lectures by N. E. STeenrod written and revised by D. B. A. Epstein.
  Annals of Mathematics Studies, No. 50. Princeton University Press, Princeton,
  N.J., 1962.

\end{thebibliography}

\end{document}